\documentclass[12pt]{article}
\usepackage[paperwidth=230mm, paperheight=310mm]{geometry}
\usepackage{graphicx}
\usepackage{amssymb}
\usepackage{amsmath}
\usepackage{xcolor} % All revisions use the color switch below.
\usepackage[normalem]{ulem} % Blue strikeouts for the proposed deletions.
\usepackage[hidelinks]{hyperref}
\usepackage{enumerate}
\usepackage{amsthm}
\usepackage{amsmath}
\usepackage{float}
\usepackage{mathtools}
 \usepackage{lineno}

\makeatletter
\renewenvironment{proof}[1][\proofname]{%
   \par\pushQED{\qed}\normalfont%
   \topsep6\p@\@plus6\p@\relax
   \trivlist\item[\hskip\labelsep\bfseries#1\@addpunct{.}]%
   \ignorespaces
}{%
   \popQED\endtrivlist\@endpefalse
}
\makeatother

\numberwithin{equation}{section}

\newtheorem{theorem}{Theorem}
\newtheorem{proposition}[theorem]{Proposition}
 \numberwithin{theorem}{section}
 \newtheorem{corollary}[theorem]{Corollary}
\newtheorem{lemma}[theorem]{Lemma}

\newtheorem{remark}[theorem]{Remark}

\makeatletter
\def\keywords{\xdef\@thefnmark{}\@footnotetext}
\makeatother

\renewcommand{\P}{\mathbb{P}}
\newcommand{\E}{\mathbb{E}}
\newcommand{\R}{\mathbb{R}}

\newcommand{\cD}{\mathcal D}
\newcommand{\cH}{\mathcal H}

\newcommand{\cF}{\mathcal F}

\newcommand{\cZ}{\mathcal Z}

\newcommand{\cG}{\mathcal G}

\newcommand{\eps}{\varepsilon}

\newcommand{\revisioncolor}{black}
\newcommand{\revision}[1]{{\color{\revisioncolor}#1}}
\newif\ifshowdeletions
\showdeletionsfalse
\newsavebox{\deletedbox}
\newcommand{\deleted}[1]{%
  \ifshowdeletions
    \begingroup\color{\revisioncolor}%
    \sbox{\deletedbox}{#1}%
    \rlap{\raisebox{.5ex}{\rule{\wd\deletedbox}{.4pt}}}%
    \usebox{\deletedbox}%
    \endgroup
  \fi}
\newcommand{\Extra}[1]{{\color{\revisioncolor}#1}}
\renewcommand{\Extra}[1]{}

\newcommand{\1}{\mathbf 1}
\newcommand{\dd}{\,\mathrm d}

\newcommand{\Zeros}{\operatorname{Zeros}}
\newcommand{\norm}[1]{\lVert #1\rVert}

\newcommand{\Blue}[1]{{\color{black}#1}}

\title{\revision{Exceptional times of compact support for $\alpha$-stable superprocesses}}
\author{Jieliang Hong$^{*}$  and  Leonid Mytnik$^{\dagger}$}
\date{{\small  {\it  
 $^{*}$Department of Mathematics, Southern University of Science and Technology, \\Shenzhen, China\\
 $^{\dagger}$Faculty of Data and Decision Sciences, Technion, Haifa, Israel\\
$^{*}$E-mail:  {\tt hongjl@sustech.edu.cn}  \\
$^{\dagger}$E-mail:  {\tt leonidm@technion.ac.il}  
  }
  }
  }

\hypersetup{
  pdftitle={Exceptional times of compact support for alpha-stable superprocesses},
  pdfauthor={},
  pdfsubject={Exceptional support times and zero sets with continuous immigration},
  pdfkeywords={stable superprocess, exceptional times, compact support,
    extinction, immigration, squared Bessel process, Hausdorff dimension}
}

\begin{document}

\maketitle
\begingroup\color{\revisioncolor}
\begin{abstract}
\revision{We consider symmetric $\alpha$-stable Dawson--Watanabe
superprocesses on $\R^d$ for every $d\geq1$ and $0<\alpha<2$,
started from an arbitrary nonzero finite measure.
We prove that, almost surely, the times at which the spatial
support is compact and nonempty have Hausdorff dimension one
in every nontrivial interval before extinction.
We also prove that, almost surely, the times at which the support
is contained in any prescribed neighborhood of the extinction
point have Hausdorff dimension one in every nontrivial interval
ending at extinction.
These results extend the corresponding conclusions for
$0<\alpha<2/3$ to the full stable range.
%\BlueStrike{The main ingredient is a uniform zero-set estimate for a Feller
%branching diffusion started from zero with a deterministic
%continuous cumulative immigration function: for every
%$\theta\in(0,1)$, sufficiently small total immigration ensures
%that its zero set on a prescribed open interval has Hausdorff
%dimension at least $\theta$, with arbitrarily high probability.}
\Blue{The key new ingredient is a uniform zero-set theorem of
independent interest for Feller branching diffusions started from
zero with deterministic continuous cumulative immigration.
It shows that sufficiently small total immigration ensures that
the zero set on a prescribed open interval has Hausdorff dimension
arbitrarily close to one, with arbitrarily high probability.
The estimate is uniform over the immigration profile and allows
singular continuous immigration measures.}}
\end{abstract}
\endgroup

\smallskip

\noindent\revision{\textbf{Keywords.} Stable superprocess;
exceptional times; compact support; extinction; continuous immigration;
squared Bessel process; Hausdorff dimension.}

\smallskip
\noindent\revision{\textbf{2020 Mathematics Subject Classification.}
Primary 60J68; secondary 60G52, 60J80, 28A80.}

\section{Introduction}

Let $M_F(\R^d)$ be the space of finite Borel measures on $\R^d$,
equipped with the topology of weak convergence.  For
$\mu\in M_F(\R^d)$ and a bounded Borel function $f$, write
\[
  \mu(f)=\int_{\R^d}f(x)\,\mu(\dd x),\qquad
  \mu(1)=\mu(\R^d),
\]
and denote the closed support of $\mu$ by $S(\mu)$.
\revision{Throughout the paper, $\dim$ denotes Hausdorff dimension.}

Fix $d\geq1$ and $\alpha\in(0,2)$.  Let
\[
  \Delta_\alpha=-(-\Delta)^{\alpha/2}
\]
be the generator of the symmetric $\alpha$-stable process on
$\R^d$.  A symmetric stable Dawson--Watanabe superprocess
$X=(X_t,t\geq0)$, starting from $X_0\in M_F(\R^d)$, is the
continuous $M_F(\R^d)$-valued Markov process characterized by the
martingale problem
\begin{equation}\label{e1.1}
  M_t(f)
  :=X_t(f)-X_0(f)-\int_0^tX_s(\Delta_\alpha f)\,\dd s,
  \qquad f\in C_b^2(\R^d),
\end{equation}
where $M(f)$ is a continuous local martingale with quadratic
variation
\begin{equation}\label{e1.2}
  \langle M(f)\rangle_t=\int_0^tX_s(f^2)\,\dd s.
\end{equation}
\revision{For general background on superprocesses, see
\cite{Dyn91,Per02,Li11}.}
We write $\P_{X_0}$ for its law.  

By letting $f\equiv 1$ in \eqref{e1.1} and \eqref{e1.2}, one easily gets that $X(1)=(X_t(1),t\geq0)$ is a Feller
branching diffusion satisfying
\begin{align*}
  X_t(1)=X_0(1)+\int_0^t\sqrt{X_s(1)}\,\dd B_s,
\end{align*}
where $B=(B_t,t\geq0)$ is a one-dimensional standard Brownian motion starting from $0$.
It follows that if we let
\begin{align*}
  \zeta:=\inf\{t\geq0:X_t(1)=0\},
\end{align*}
then
\begin{equation}\label{e1.3}
  \P_{X_0}(\zeta>t)=1-\exp\left\{-\frac{2X_0(1)}{t}\right\},
  \qquad t>0.
\end{equation}
Throughout the paper we assume $X_0\neq0$.

The jump mechanism of the stable spatial motion gives rise to
instantaneous propagation.  A theorem of Perkins \cite{Per90} implies
that, for every fixed $t>0$,
\begin{equation}\label{e1.4}
  \P_{X_0}\bigl(S(X_t)=\R^d\mid X_t\neq0\bigr)=1.
\end{equation}
Extensions to other spatial motions and branching mechanisms may be
found in Evans and Perkins \cite{EP91} and Li and Zhou \cite{LZ08}.
Thus $S(X_t)=\R^d$ for Lebesgue-a.e. $t\in(0,\zeta)$, almost
surely.  This fixed-time statement does not, however, describe the
exceptional geometry of a sample path.

Define
\begin{equation}\label{e1.5}
  H=\{t>0:S(X_t)\text{ is compact and nonempty}\}.
\end{equation}
For $x\in\R^d$ and $r>0$, let
\begin{equation}\label{e1.6}
  D(x,r)=\{y\in\R^d:\norm{y-x}_\infty<r\},
  \qquad D_r=D(0,r),
\end{equation}
and set
\begin{equation}\label{e1.7}
  H_x^r
  =\{t>0:S(X_t)\subseteq\overline{D(x,r)}
        \text{ and }X_t\neq0\}.
\end{equation}

Tribe \cite{Tri92} proved that there is an $\R^d$-valued random
variable $F$, called the extinction point, such that
\begin{equation}\label{e1.8}
  \frac{X_t}{X_t(1)}\longrightarrow\delta_F
  \quad\text{weakly as }t\uparrow\zeta,
  \qquad \P_{X_0}\text{-a.s.}
\end{equation}
\revision{Tribe also obtained support collapse along a sequence of
times when the spatial generator is bounded, or for the
one-dimensional symmetric stable superprocess with
$\alpha\in(0,1/2)$.}

More recently, Hong and Mytnik \cite{HM25} proved that, for
$\alpha\in(0,2/3)$ and every $d\geq1$, the set $H$ is dense in
$[0,\zeta]$, has full Hausdorff dimension, and has dimension one in
every nontrivial subinterval of $[0,\zeta]$.  They also upgraded
Tribe's sequential support-collapse result to a full-dimensional set
of times.  The restriction $\alpha<2/3$ arose from a fourth-moment
estimate for the instantaneous immigration rate at the boundary of a
killed stable process.  The same paper conjectured that both results
remain true for every $\alpha\in(0,2)$.

Our first theorem proves this conjecture for exceptional compact
supports.

\begin{theorem}\label{t1.1}
Let $d\geq1$.  Let $X$ be a symmetric stable superprocess on
$\R^d$ with index $\alpha\in(0,2)$, starting from
$0\neq X_0\in M_F(\R^d)$.  Then 
\[
 \P_{X_0}\Big( \dim\bigl(H\cap(s,t)\bigr)=1  \ \text{ for every }  \ 0\leq s<t\leq\zeta \Big)=1.
\]
In particular, $H$ is dense in $[0,\zeta]$ and
$\dim(H)=1$.
\end{theorem}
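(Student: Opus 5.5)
Our plan follows the strategy of \cite{HM25}, replacing the fourth-moment step (which forced $\alpha<2/3$) with a zero-set estimate for a Feller branching diffusion with continuous immigration.

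\textbf{Step 1: reduction to a local statement.} Since $\dim$ is monotone and rational intervals suffice, it is enough to show the following. For fixed rationals $0\le s<t$ and every $\theta\in(0,1)$,
\[
\dim\bigl(H\cap(s,t)\bigr)\ge\theta \quad\text{a.s. on } \{\zeta>t\}.
\]
We also need the analogue for intervals $(s,\zeta)$ on $\{s<\zeta\}$. That case follows from the first, because on $\{s<\zeta\}$ there is a rational $t\in(s,\zeta)$.

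By the Markov property at time $s$ we may take $s=0$, with an arbitrary nonzero initial measure. On $\{\zeta>t\}$ we have $X_{t/2}\ne0$. Choosing $R$ large, with probability close to one the mass of $X_{t/2}$ outside $D_R$ is as small as we like.

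\textbf{Step 2: decomposition inside and outside a box.} Fix a large box $D_{R'}$. We decompose $X$ after time $t/2$ into two parts:
\begin{itemize}
\item $Y$, the part of $X$ whose genealogy never leaves $D_{R'}$, i.e.\ the superprocess killed on exiting $D_{R'}$ (an exit measure / historical decomposition);
\item $Z$, the part descended from mass that has exited $D_{R'}$ or started outside $D_R$.
\end{itemize}
The support of $Y_r$ lies in $\overline{D_{R'}}$. So every time $r$ at which $Z_r=0$ but $Y_r\ne0$ belongs to $H$.

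The key structural fact is that the total mass $Z_r(1)$, conditioned on the exit flow, is a Feller branching diffusion started from (nearly) zero with immigration. Its cumulative immigration $A(r)$ is the total mass exiting $D_{R'}$ up to time $r$, plus the small initial mass outside $D_R$. Because the stable process leaves the box by jumps, the exit rate is not a nice density. However, $A$ is a continuous nondecreasing function, and conditioning on $Y$ makes it deterministic. This is exactly why a uniform estimate over continuous immigration profiles, including singular ones, is needed.

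\textbf{Step 3: the main obstacle, controlling $A$.} We must show that $A(t)-A(t/2)$ is small with high probability when $R'\gg R$. By the first-moment formula, $\E A(t)$ is bounded by the initial mass times the probability that a stable path started in $D_R$ exits $D_{R'}$ before time $t$. This probability tends to zero as $R'\to\infty$, so Markov's inequality gives the required smallness using only first moments, which is the point of avoiding the fourth-moment estimate.

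We then apply the zero-set theorem, conditionally on the immigration function: with probability at least $1-\eps$, the zero set of $Z(1)$ in $(t/2,t)$ has Hausdorff dimension at least $\theta$. Establishing the needed independence is the delicate part. We would argue that, given the exit process, $Z$ is a superprocess with immigration driven by independent branching. Equivalently, we would invoke a Markov property of the exit measures, or the Dynkin special Markov property.

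\textbf{Step 4: the killed part survives, and conclusion.} We also need $Y_r\ne0$ throughout $(t/2,t)$ with high probability on $\{\zeta>t\}$. This holds because $Y\le X$ and $Y$ captures most of the mass: the event that $Y$ dies before $t$ while $X$ survives has small probability for large $R'$, by comparing extinction probabilities via \eqref{e1.3}.

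On the good event, $H\cap(0,t)$ contains the zero set of $Z(1)$ in $(t/2,t)$, so
\[
\dim\bigl(H\cap(0,t)\bigr)\ge\theta .
\]
Letting $\eps\to0$ and then $\theta\uparrow1$ gives dimension one a.s. Density of $H$ follows immediately.
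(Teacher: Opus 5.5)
Your proposal is correct in outline and follows the paper's route. It uses the killed decomposition $X=V^R+W^R$ of Proposition~\ref{p2.2}, the first-moment bound \eqref{e2.10} with Markov's inequality to make the total immigration small, the uniform zero-set theorem applied conditionally on the killed process (Corollary~\ref{c3.7}), and the Markov property at rational times. The only organizational difference is the choice of time window. You work on $\{\zeta>t\}$ with fixed windows $(t/2,t)$, which makes survival of $X$ automatic. The paper proves Proposition~\ref{p4.1} on mass-scaled windows $(\rho^2m,\rho m)$ and controls survival by \eqref{e1.3}, so it obtains $\dim(H\cap(0,T))=1$ almost surely with no conditioning. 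That scaled form is also what the paper reuses near extinction for Theorem~\ref{t1.2}.

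Two details need repair.

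First, folding the initial mass $X_{t/2}(D_R^c)$ into the immigration makes $A$ jump at time $t/2$. Theorem~\ref{t1.4} requires $a$ continuous with $a(0)=0$, so it does not apply as stated. Instead, split this mass off as an independent unkilled superprocess. By \eqref{e1.3} it survives past time $t/2+\delta$ with probability at most $2X_{t/2}(D_R^c)/\delta$. Then apply the theorem on $(t/2+\delta,t)$; this is the role of $X^2$ and \eqref{e4.4} in the paper.

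Second, Step~4 is unnecessary, and its justification is inadequate as written. $Y(1)$ is not a Feller branching diffusion, since it carries the extra drift $-Y_s(f_{R'})\,\dd s$, so \eqref{e1.3} does not govern it. You only need $Y_r\neq0$ at zeros $r$ of $Z(1)$. There $Y_r=X_r$, which is nonzero on $\{\zeta>t\}$; this is exactly the device used in the proof of Proposition~\ref{p4.1}.

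Incidentally, here $A(r)=\int_{t/2}^{r}Y_s(f_{R'})\,\dd s$ is absolutely continuous. What the zero-set theorem buys in this application is that no moment bound on this density is needed, not the treatment of singular immigration profiles.
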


The second theorem gives the corresponding support collapse near
extinction and extends both Tribe \cite{Tri92} and
Hong--Mytnik \cite{HM25} to the full stable range.

\begin{theorem}\label{t1.2}
Let $d\geq1$.  Let $X$ be a symmetric stable superprocess on
$\R^d$ with index $\alpha\in(0,2)$, starting from
$0\neq X_0\in M_F(\R^d)$.  Let $F$ be the extinction point in
\eqref{e1.8}.  Then, for every $r>0$,
\[
  \P_{X_0}\Big(    \dim\bigl(H_F^r\cap(t,\zeta)\bigr)=1  \ \text{ for every }  \  0\leq t<\zeta  \Big)=1.
\]
\end{theorem}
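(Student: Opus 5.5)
We will prove Theorem~\ref{t1.2} with the same machinery as Theorem~\ref{t1.1}, combined with Tribe's convergence \eqref{e1.8}. The plan is as follows. Fix $r>0$. Since $H_F^r\cap(t,\zeta)\subseteq(t,\zeta)$, the upper bound $\dim\le1$ is trivial, so only the lower bound $\dim(H_F^r\cap(t,\zeta))\ge\theta$ for each $\theta<1$ is needed. By countability, it suffices to treat rational $\theta$ and rational $t$; the case of general $t<\zeta$ then follows by monotonicity of $H_F^r\cap(t,\zeta)$ in $t$.

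\textbf{Step 1: localization near the extinction point.} By \eqref{e1.8}, for any $\eps>0$ there is a random time $\tau<\zeta$ after which the mass outside $D(F,r/4)$ is at most $\eps X_t(1)$. We cover $\R^d$ by countably many boxes $D(x_k,r/4)$ with rational centres, so that $F\in D(x_k,r/4)$ for some $k$ and $\overline{D(x_k,r/2)}\subseteq\overline{D(F,r)}$. This reduces the problem to the following claim: on the event that $F\in D(x_k,r/4)$ and $t<\zeta$, the set $\{s\in(t,\zeta):S(X_s)\subseteq\overline{D(x_k,r/2)}\}$ has dimension $\ge\theta$ almost surely.

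\textbf{Step 2: Markov property and decomposition.} We apply the Markov property at a stopping time $T\in(t,\zeta)$, for example a time at which $X_T(1)$ is small and most of the mass lies in $D(x_k,r/4)$. We then decompose $X$ after $T$ into an interior part and an exterior part. The interior part is started from the restriction of $X_T$ to $D(x_k,r/4)$. The exterior part, namely the mass outside the box $D(x_k,r/2)$, is described through the excursion/immigration structure already used for Theorem~\ref{t1.1}. In this description the total mass outside $D(x_k,r/2)$ is a Feller branching diffusion started from (nearly) zero whose cumulative immigration is the continuous boundary-crossing rate of mass from the interior. Conditionally on the interior part, this immigration is deterministic and continuous, so the uniform zero-set theorem applies. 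That theorem states that if the total immigration is small, then the zero set on a prescribed open interval has dimension $\ge\theta$ with probability at least $1-\delta$. Zeros of the exterior mass are exactly the times when $S(X_s)\subseteq\overline{D(x_k,r/2)}$. If the interior part survives on the prescribed interval, these times also satisfy $X_s\ne0$, and are therefore in $H^r_F$.

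\textbf{Step 3: smallness and iteration.} The total immigration is bounded by a constant (depending on $r$ and the jump kernel) times $\int_T^\zeta X_s(1)\,ds$ plus the mass initially outside. As $T\uparrow\zeta$, both quantities tend to zero, because $\int_T^\zeta X_s(1)\,ds\to0$ and \eqref{e1.8} controls the initial mass outside. The prescribed interval must lie in $(T,\zeta)$, so it should be chosen as an interval of length comparable to $X_T(1)$. By scaling of the Feller diffusion, survival over that interval has probability bounded below, and the ratio of immigration to mass is what must be small. Applying this at a sequence of stopping times $T_n\uparrow\zeta$ gives infinitely many conditionally almost independent trials, each succeeding with probability bounded below. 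A conditional Borel--Cantelli argument (Lévy's extension) then shows that at least one trial succeeds almost surely, which gives $\dim\ge\theta$ inside $(t,\zeta)$.

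\textbf{Main obstacle.} The hardest step is Step 3, that is, making the immigration small relative to the scale of the interval. Since \eqref{e1.8} only yields a relative mass fraction $\eps$, one has to check that the uniform zero-set theorem is scale-invariant: if the mass is rescaled by $X_T(1)$ and time by $X_T(1)$, the relative immigration must be $o(1)$. We would first try Brownian scaling of the Feller diffusion together with a crossing-rate estimate of order $X_s(D(x_k,r/4))\,r^{-\alpha}$, which is bounded by $X_s(1)$ times the interval length, itself of order $X_T(1)$. This gives a relative immigration of order $X_T(1)\to0$, which is what the argument needs.
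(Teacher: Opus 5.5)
Your architecture matches the paper's: stop when the total mass is a small $\eps$ concentrated near a grid box, apply the local estimate (the killed decomposition $X=V^R+W^R$ plus the uniform zero-set theorem after rescaling time and mass by $m=X_T(1)$), then let the stopping time increase to $\zeta$. The genuine gap is in how you control the immigration in Step~3 and in your last paragraph. You claim the total immigration is at most a constant (depending on $r$) times $\int X_s(1)\,\dd s$, i.e.\ a crossing rate of order $X_s(D(x_k,r/4))\,r^{-\alpha}$. This is false. The immigration rate is $V^R_s(f_R)$, and $f_R$ from \eqref{e2.4} grows like $\operatorname{dist}(x,\partial D_R)^{-\alpha}$ near the boundary. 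The killed process does not stay in the inner box: it spreads by jumps over all of $D_R$, including the boundary layer where $f_R$ is large. So there is no deterministic $C$ with $A^R_t\leq C\int_0^tV^R_s(1)\,\dd s$; controlling this rate pathwise or in $L^2$ is exactly what restricted the earlier Hong--Mytnik work to $\alpha<2/3$.

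What is true, and all the paper uses, is a first-moment identity (Proposition~\ref{p2.1}). Take expectations in $V^R_t(1)=\mu(1)+M^{V^R}_t(1)-A^R_t$. This gives $\E_\mu[A^R_t]=\int\P_x(\tau_R\leq t)\,\mu(\dd x)\leq C\mu(1)tR^{-\alpha}$ for $\mu$ carried by $D_{R/2}$, using L\'evy's maximal inequality. For $a(u)=A^R_{mu}/m$ this gives $\E[a(1)]\leq CmR^{-\alpha}$. Markov's inequality then gives $\P(a(1)>\kappa)\leq\eta/3$ once $mR^{-\alpha}\leq\gamma$. The zero-set theorem is applied conditionally on $V^R$ only on $\{a(1)\leq\kappa\}$ (Corollary~\ref{c3.7}). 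With $R$ fixed of order $r$ and $m=\eps\downarrow0$, your heuristic ``relative immigration of order $X_T(1)r^{-\alpha}$'' holds only in this averaged sense. The absolute statement $\int_T^\zeta X_s(1)\,\dd s\to0$ plays no role. Also, $W^R$ is the progeny of mass that has exited, not the mass lying outside the box. Zeros of $W^R(1)$ give only the inclusion $S(X_s)\subseteq\overline{D_R}$, which is all you need.

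A second point needs repair. The mass of $X_T$ outside the inner box cannot be ``added to the immigration'', because Theorem~\ref{t1.4} requires $Z_0=0$ and continuous $a$. The paper runs that mass (at most $\rho^3m$) as an independent superprocess, which by \eqref{e1.3} is extinct before time $\rho^2m$ with probability at least $1-2\rho$. Meanwhile the inner part survives past $\rho m$ with probability at least $1-e^{-1/\rho}$. This is why the observation window is $(\rho^2m,\rho m)$, not an arbitrary interval of length comparable to $m$.

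With these choices each trial succeeds with conditional probability at least $1-\eta$, so no conditional Borel--Cantelli argument is needed. The paper bounds $\P_{X_0}(\dim(\cH^r\cap(\zeta-N^{-1},\zeta))\geq\theta)$ below by $(1-\eta)\P_{X_0}(B_\eps)-\P_{X_0}(\tau_\eps\leq\zeta-N^{-1})$, lets $\eps\downarrow0$, then lets $\eta\downarrow0$.

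Your localization with a fixed index $k$ and radius $R=r/2$ is acceptable, provided the stopping time is defined from $X$ alone and not from $F$. The paper instead picks an $\cF_{\tau_\eps}$-measurable index $k_\eps$, proves the bound for $\cH^r=\bigcup_kH^r_{x_k}$, and converts to $H_F^{3r}$ at the end.
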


\begin{remark}\label{r1.3}
When $d=1$ and $\alpha\in(1,2)$, the superprocess has a density
$u(t,x)$ \revision{at positive times $t>0$},
which is the nonnegative weak solution of the following SPDE (see Konno-Shiga \cite{KS88}):
\begin{align} \label{ea3.1}
\frac{\partial u(t,x)}{\partial t} =\Delta_\alpha u(t,x)+\sqrt{u(t,x)} \dot{W}(t,x), \ \quad u(t,x)\geq 0,
\end{align}
where $\dot W=\dot W(t,x)$ is a space-time white noise.
\revision{For a general finite initial measure, the initial condition
is understood weakly as $u(t,x)\,\dd x\Longrightarrow X_0$
as $t\downarrow0$.}
Theorem \ref{t1.1} therefore shows that the
solution has compact spatial support on a set of times of Hausdorff
dimension one, although its support is $\R$ for Lebesgue-a.e. time
before extinction.
\end{remark}

We outline the proof and indicate the difference from \cite{HM25}.  Suppose first that $X_0$ is supported in
$D_{R/2}$.  We decompose
\begin{equation}\label{e1.9}
  X_t=V_t^R+W_t^R.
\end{equation}
Here $V^R$ is the superprocess whose stable spatial motion is killed
on leaving $D_R$, while $W^R$ collects the descendants of the mass
which leaves $D_R$.  In particular,
\[
  S(V_t^R)\subseteq\overline{D_R}.
\]
Let $A^R=(A_t^R,t\geq0)$ be the cumulative total mass immigrating
into $W^R$.  Conditionally on $V^R$, the total mass
\[
  Z_t=W_t^R(1)
\]
is a Feller branching diffusion with immigration path $A^R$:
\begin{equation}\label{e1.10}
  Z_t=\int_0^t\sqrt{Z_s}\,\dd B_s+A_t^R,\qquad Z_0=0.
\end{equation}
Whenever $Z_t=0$, we have $W_t^R=0$,  hence 
\eqref{e1.9} gives $X_t=V_t^R$ and the support of $X_t$
is contained in $\overline{D_R}$.

The earlier proof in \cite{HM25} wrote $A_t^R=\int_0^t\dot A_s^R\,\dd s$, where we controlled
$\dot A^R$ uniformly, and compared $Z$ with a squared Bessel
process.  The necessary square-integrability is equivalent to
\[
  \E\left[\int_0^t V_s^R(f_R^2)\,\dd s\right]<\infty,
\]
where $f_R$ is defined later in \eqref{e2.4}.  As shown in
\cite[Proposition 1.4]{HM25}, the above holds if and only if
$\alpha<2/3$.  In contrast, our proof does not estimate
$\dot A^R$.  We use only
\begin{equation}\label{e1.11}
  \E_{X_0}[A_t^R]\leq CX_0(1)tR^{-\alpha},
\end{equation}
which follows from a first-moment identity and is valid for every
$\alpha\in(0,2)$.

%\BlueStrike{The following zero-set estimate is the major ingredient of our proof.}
\Blue{Alongside the superprocess results, we establish the following
uniform zero-set theorem for Feller branching diffusions with
deterministic continuous cumulative immigration. The theorem is
formulated independently of the superprocess setting and supplies
the key estimate for Theorems~\ref{t1.1} and~\ref{t1.2}.}

\begin{theorem}\label{t1.4}
Let $0<\theta<1$, $0<\eta<1$, and let
$J\subset(0,1)$ be a nonempty open interval.  There exists
$\kappa=\kappa(\theta,\eta,J)>0$ such that the following holds:
If $a$ is a deterministic continuous increasing function on
$[0,1]$, $a(0)=0$, $a(1)\leq\kappa$, and
\[
  Z_t=\int_0^t\sqrt{Z_s}\,\dd B_s+a(t),\qquad Z_0=0,
\]
then
\[
  \P\bigl(\dim(\Zeros(Z)\cap J)\geq\theta\bigr)\geq1-\eta.
\]
\end{theorem}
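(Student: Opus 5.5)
Here is the plan. I compare $Z$ with the immigration-free process and treat the immigration as a perturbation that is small in total but may be arbitrarily irregular in time.

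\textbf{Step 1: a zero-dimension result for squared Bessel processes.} Without immigration, $Z\equiv0$. So I use a renewal structure instead. Zeros of $Z$ near a time $t$ arise when $Z$ is small and the immigration on a short window after $t$ is negligible compared with the fluctuations. The benchmark is as follows. For the squared Bessel process of dimension $\delta\in(0,2)$ (generator $\tfrac12 x\,\partial_{xx}+\tfrac{\delta}{4}\partial_x$ in the present normalization), the zero set has dimension $1-\delta/2$ almost surely on any interval where it is nonempty. Taking $\delta=2(1-\theta')$ with $\theta<\theta'<1$ gives dimension $\theta'>\theta$.

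\textbf{Step 2: pathwise comparison.} Comparison for $\dd Z=\sqrt{Z}\dd B+\dd a$ works pathwise, since the diffusion coefficient is Yamada--Watanabe admissible and the drift is a measure. The comparison is made with the same Brownian motion $B$. If $a(t)-a(s)\le c(t-s)$ held on a subinterval, then $Z$ would be dominated there by the squared Bessel process $Y$ with drift rate $c$, restarted from $Z_s$. Zeros of $Y$ are zeros of $Z$, so one inherits the dimension bound $1-2c$. The difficulty is that $a$ may be singular, with no density. The fix is to use only that $a(1)\le\kappa$. By a Vitali/maximal-function argument, the set
\[
G=\{t\in J:\ a(t+h)-a(t)\le c\,h \text{ for all } h\in(0,h_0]\}
\]
has Lebesgue measure at least $|J|-C\kappa/c$ (the Hardy--Littlewood weak-type bound applied to the measure $\dd a$). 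So for $\kappa\ll c|J|$, $G$ contains a compact set of positive measure. From any $t\in G$, on $[t,t+h_0]$ the process is dominated by a squared Bessel process of rate $c$ started at $Z_t$. I expect this domination via Lebesgue density to be the main obstacle. At most times $Z_t>0$, so I must also ensure that $Z_t$ is small at suitable $t\in G$.

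\textbf{Step 3: making $Z$ small at good times.} First, $\E Z_t=a(t)\le\kappa$, so by Markov's inequality $Z_t\le\sqrt\kappa$ with probability at least $1-\sqrt\kappa$ at any fixed $t$. I choose finitely many points $t_1<\dots<t_N$ in $G$, with $N$ depending only on $\eta$, each a Lebesgue density point of $G$, and spaced at distance at least $h_0$. At each $t_i$, the dominating Bessel process $Y^{(i)}$ started from $Z_{t_i}\le\sqrt\kappa$ hits zero before time $h_0/2$ with probability at least $1-\epsilon(\kappa)$, where $\epsilon(\kappa)\to0$ as $\kappa\to0$; this follows from the explicit hitting law of BESQ$^\delta$ with $\delta<2$. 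After hitting, its zero set on $[t_i,t_i+h_0]$ has dimension $\theta'$ with positive probability $p_0$, by scaling and the strong Markov property, where $p_0$ depends only on $c$ and $h_0$. Because the Brownian increments on disjoint windows are independent and domination restarts at each $t_i$, the events for different $i$ are conditionally independent given $\mathcal F_{t_i}$. The probability that all $N$ of them fail is then at most $(1-p_0+\epsilon+\sqrt\kappa)^N$. I choose $N$ so that $(1-p_0/2)^N<\eta$, then $h_0$ with $N h_0<|J|$, and finally $\kappa$ small enough that $G$ contains $N$ well-separated points and $\epsilon+\sqrt\kappa<p_0/2$.

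The hitting step in Step 3 needs one refinement. Domination holds only along increments $h\le h_0$ from $t_i$ itself. But for comparison on $[t_i,t_i+h_0]$, the SDE comparison theorem only needs $a(u)-a(t_i)\le c(u-t_i)$ for all $u$ in the window, and that is exactly the defining property of $G$. So the approach closes, and $\dim(\Zeros(Z)\cap J)\ge\theta'>\theta$ with probability at least $1-\eta$.
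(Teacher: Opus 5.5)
Your proposal has a genuine gap: the squared-Bessel domination in Step~2 needs a pointwise density bound on $\dd a$, and the closing ``refinement'' asserts a comparison theorem that is false. With the same $B$ and $Y_{t_i}=Z_{t_i}$, the local time of $Z-Y$ at $0$ vanishes because $\dd\langle Z-Y\rangle\le|Z-Y|\,\dd r$. Tanaka's formula then gives
\[
  \E\bigl[(Z_u-Y_u)^+\bigr]=\E\int_{t_i}^u\1_{\{Z_r>Y_r\}}\bigl(\dd a(r)-c\,\dd r\bigr).
\]
So pathwise domination needs $\dd a\le c\,\dd r$ as measures on the window, at least where $Z>Y$. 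The cumulative bound $a(u)-a(t_i)\le c(u-t_i)$ is not enough, and it genuinely fails.

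Here is a counterexample. Let $a$ be flat on the first half of the window and have slope $2c$ on the second half; the cumulative bound from $t_i$ holds. With positive probability, $Y$ (dimension $4c<2$) hits $0$ at some $\sigma$ inside the second half. If $Z\le Y$ held, then $Z_\sigma=Y_\sigma=0$. After $\sigma$ the ordinary comparison (drift $2c\ge c$) gives $Z\ge Y$, so $Z\equiv Y$ there. This contradicts
\[
  Z_u-Y_u=\int_\sigma^u\bigl(\sqrt{Z_r}-\sqrt{Y_r}\bigr)\,\dd B_r+c(u-\sigma)=c(u-\sigma)>0 .
\]

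Nor can you repair this by requiring $a$ to be $c$-Lipschitz on a whole window. If $\dd a$ is singular continuous with dense support (say, a sum of small Cantor measures centred at the rationals), no such window exists. The same holds if $\dd a$ is absolutely continuous with a density unbounded on every interval. Yet the theorem must hold uniformly over exactly such $a$. Comparison with a squared Bessel process is essentially the density-based route of the earlier work, which this theorem is designed to avoid.

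What is missing is a mechanism that uses a maximal bound only at individual times. The paper gets one from the branching decomposition at time $t$. On $[t,t']$, $Z$ is an immigration-free Feller diffusion started from $Z_t$ plus an independent process started from $0$, which yields
\[
  \P(Z_t\le\eps,\,Z_{t'}\le\eps)\le q_\eps(t)\,q_\eps(t')\exp\Bigl\{2\int_0^t\frac{\dd a(s)}{t'-s}\Bigr\}.
\]
If the \emph{left} maximal slope $\sup_{0<h\le t'}(a(t')-a(t'-h))/h$ is at most $\delta$, integration by parts bounds the exponential by $e^{2\delta}|t'-t|^{-2\delta}$. Immigration \emph{before} $t'$ is what matters here, whereas your $G$ controls increments after $t$. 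By the rising sun lemma, such good times have measure at least $1-a(1)/\delta$.

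The paper then bounds the second moment and the $\theta$-energy of $q_\eps(t)^{-1}\1_{\{Z_t\le\eps\}}\,\dd t$ restricted to good times. Paley--Zygmund and Frostman then give $\dim\ge\theta$ on an interval $I$ of length $\ell$ with probability at least $c\,g_I^2/\ell^{2-2\delta}$, where $g_I$ is the good-time mass in $I$. Summing over $\asymp1/\ell$ separated intervals and using Cauchy--Schwarz gives a total of order $\ell^{2\delta-1}\to\infty$. Independence across these intervals comes not from restarting the process but from restricting the Poisson excursion representation to disjoint neighbourhoods. Smallness of $a(1)$, via $\mathbb N(\zeta>u)=2/u$, makes the localized and global processes agree on those intervals with high probability.
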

Here and below, $\Zeros(f)=\{t:f(t)=0\}$. \Blue{The essential point is the uniformity in $a$: once $\theta$,
$\eta$, and $J$ are fixed, the same threshold $\kappa$ applies to
every continuous increasing $a$ satisfying $a(0)=0$ and
$a(1)\leq\kappa$. No immigration density or common modulus of
continuity is assumed; in particular, the immigration measure may
be singular continuous. Thus the criterion controls the size of
the zero set through total immigration, rather than a pointwise
bound on its rate.}
%\BlueStrike{To our knowledge, this uniform high-probability
%estimate for the local Hausdorff dimension of the zero set is new.}

\revision{The time horizon $1$ in Theorem~\ref{t1.4} is only a
normalization. For any $T>0$, any deterministic continuous increasing
$a$ on $[0,T]$ with $a(0)=0$, and any nonempty open interval
$J\subset(0,T)$, the same conclusion holds provided
$a(T)\leq T\kappa(\theta,\eta,J/T)$, where
$J/T=\{t/T:t\in J\}$. Indeed, by Lemma~\ref{l3.8},
\[
  \widetilde Z_u=\frac{Z_{Tu}}{T},\qquad
  \widetilde a(u)=\frac{a(Tu)}{T},\qquad
  \widetilde B_u=\frac{B_{Tu}}{\sqrt T},\qquad 0\leq u\leq1,
\]
satisfy the same equation on $[0,1]$, and linear rescaling of
time preserves Hausdorff dimension.}

\revision{The process in Theorem~\ref{t1.4} extends the squared
Bessel model to continuous Stieltjes immigration: $Y_t=4Z_t$
satisfies
$\dd Y_t=2\sqrt{Y_t}\,\dd B_t+4\,\dd a(t)$.
If $\dd a(t)=b(t)\,\dd t$, its time-dependent dimension is $4b(t)$.
Faraud and Goutte~\cite{FG14} study squared Bessel processes with
time-dependent dimension and an additional linear drift, assuming
piecewise continuity and local boundedness of the coefficients.
Li~\cite[Example~2.3 and Theorem~3.1]{Li02} develops more general
inhomogeneous immigration through entrance laws and diffuse
immigration measures. Specializing this framework to a one-point
state space, quadratic branching mechanism $q^2/2$, and immigration
measure $\dd a$ gives the transition laws of $Z$, including singular
continuous immigration. Foucart and Uribe Bravo~\cite{FUB14} study
time-homogeneous continuous-state branching processes with
immigration (CBI). For processes started at zero, they describe the
zero sets by means of random cutouts and obtain criteria
for polarity and results on Hausdorff dimension.}

\revision{\emergencystretch=3em % Allow readable line breaks in the marked deletions.
There is a connection between the size of the zero set
and random covering theory. In the excursion representation of
Section~3, an excursion born at $s_i$ with lifetime $\zeta_i$ is
positive on $(s_i,s_i+\zeta_i)$. The relevant uncovered points lie
outside the union of these intervals, giving a
%\BlueStrike{random cutout} 
\Blue{Poisson cutout} interpretation of our zero-set problem.
\Blue{Fitzsimmons, Fristedt and Shepp~\cite[Corollary~3]{FFS85}
obtain a Hausdorff-dimension formula for the uncovered set of a
Poisson cutout with intensity $\dd s\,\mu(\dd\ell)$, which is
homogeneous in the left-endpoint coordinate $s$. In our excursion
representation, the corresponding intensity is
$\dd a(s)\,2\ell^{-2}\,\dd\ell$, where $\dd a$ is an arbitrary
deterministic atomless finite immigration measure, possibly
singular continuous. Hausdorff-dimension results for inhomogeneous
Poisson cutouts under regularity assumptions on the intensity
measure were obtained by Ojala, Suomala and Wu~\cite{OSW17}.
Related Dvoretzky covering results concern complete limsup
coverage by random intervals; see \cite{AM26,FK21,HK25}.
The contribution of Theorem~\ref{t1.4} is a uniform
high-probability estimate: on a prescribed open interval, it
gives any dimension lower bound below one with arbitrarily
high probability, uniformly over all deterministic atomless
finite immigration measures with sufficiently small total mass.
No regularity assumption beyond atomlessness is imposed.
To our knowledge, this uniform high-probability local dimension
estimate under a small-total-mass condition is new.}
}

%\BlueStrike{Now we give a sketch of the proof of Theorem \ref{t1.4}.}
\Blue{We now sketch the proof of Theorem~\ref{t1.4}. The argument
works directly with the cumulative immigration measure and does
not require a regularity estimate for an instantaneous immigration
rate.}
The Laplace transform of $Z$, together with the branching property,
gives the two-point estimate
\begin{equation}\label{e1.12}
  \frac{\P(Z_s\leq\eps,Z_t\leq\eps)}
       {\P(Z_s\leq\eps)\P(Z_t\leq\eps)}
  \leq
  \exp\left\{2\int_0^s\frac{\dd a(u)}{t-u}\right\},
  \qquad 0<s<t.
\end{equation}
For $\delta>0$, we call $t$ good if the left maximal slope
\[
  M_a(t)=\sup_{0<h\leq t}\frac{a(t)-a(t-h)}{h}
\]
is at most $\delta$.
\revision{The one-sided maximal inequality for the Stieltjes measure
$\dd a$, obtained from the so-called Rising Sun Lemma
\cite[Chapter~3, Corollary~3.6]{SS05}, gives}
\[
  |\{t:M_a(t)>\delta\}|\leq\frac{a(1)}{\delta}.
\]
At a good time $t$, integration by parts bounds the right-hand side
of \eqref{e1.12} by a constant times
$|t-s|^{-2\delta}$. See Lemma \ref{l3.3}.  Using this, one obtains the
uniform second-moment and energy estimates.  Paley--Zygmund and
Frostman's criterion yield a fixed positive probability of a
zero-set of dimension at least any $\theta<1-2\delta$. See  Proposition \ref{p3.5}.

To raise this probability to $1-\eta$, we divide $J$ into many
separated intervals.  The Poissonian excursion representation of the
immigration process supplies independent \Blue{processes} after the immigration
measure is restricted to disjoint neighborhoods of these intervals.
Since
\[
  \mathbb N(\text{excursion lifetime}>u)=\frac{2}{u},
\]
smallness of $a(1)$ makes it unlikely that an excursion born outside
one neighborhood survives long enough to reach its central interval.
On this event the local and global processes agree there.  This
localization simultaneously gives independence and transfers the
Frostman estimate back to the original zero set.

Finally, conditionally on the killed process, we may apply Theorem
\ref{t1.4}  to \eqref{e1.10}.  By the
Feller-branching scaling $t\mapsto X_0(1)t$, we use the bound
\eqref{e1.11} to show that the total rescaled immigration \revision{is small with high probability}
whenever $X_0(1)R^{-\alpha}$ is small.   Taking $R$ large proves
Theorem \ref{t1.1}; taking the initial mass small near
extinction and using \eqref{e1.8} proves Theorem
\ref{t1.2}.\\

\bigskip

\noindent  {\bf \large  Funding Declaration}  \\

Jieliang Hong was supported by the National Natural Science Foundation of China (Grant number: 12571150).   Leonid Mytnik was supported  in part by the ISF grants No. 1985/22 and 2917/26. \\

\noindent  {\bf  \large  AI Declaration} \\
 
The results and principal proofs were obtained by the authors. During the preparation of the manuscript, the authors used OpenAI's ChatGPT to assist with language editing and critical reading. The authors reviewed and verified all changes made with this assistance and take full responsibility for the final manuscript.

\section{Killed superprocesses}

\medskip

\noindent\textit{Convention on constants.}
Constants whose values are unimportant may change from line to line.
Their dependence is displayed when it is relevant.\\

\revision{Here and below $B(x,\eps)=\{y\in\R^d:|y-x|<\eps\}$ is the
Euclidean ball, and $|\cdot|$ denotes the Euclidean norm.}
The fractional Laplacian has the integral representation
\begin{equation}\label{e2.1}
  \Delta_\alpha f(x)
  =\lim_{\eps\downarrow0}
    \int_{\R^d\setminus B(x,\eps)}
      \bigl(f(y)-f(x)\bigr)
      \frac{c_{d,\alpha}}{|y-x|^{d+\alpha}}\,\dd y,
\end{equation}
where
\begin{equation}\label{e2.2}
  c_{d,\alpha}
  =\frac{2^\alpha\Gamma((d+\alpha)/2)}
         {\pi^{d/2}|\Gamma(-\alpha/2)|}.
\end{equation}
See, for example, Kwa\'snicki \cite{Kwa17}.

\revision{Fix $R>0$. Let $Y=(Y_t,t\geq0)$ be the symmetric
$\alpha$-stable process with generator $\Delta_\alpha$, write
$Y_t^{(j)}$ for its $j$th coordinate, and set
$\tau_R=\inf\{t\geq0:Y_t\notin D_R\}$.
Its killed semigroup is
$P_t^Rg(x)=\E_x[g(Y_t);t<\tau_R]$.
The generator $\Delta_\alpha^R$ on $C_0(D_R)$ has domain
\[
  \cD(\Delta_\alpha^R)
  =\left\{g\in C_0(D_R):
    \lim_{h\downarrow0}\frac{P_h^Rg-g}{h}
    \text{ exists in }C_0(D_R)\right\}.
\]
Here $C_0(D_R)$ consists of continuous functions on $D_R$
vanishing at its boundary. We extend the pointwise action of
$\Delta_\alpha^R$ to $f\in C_b^2(\R^d)$ by the following
integral formula, for $x\in D_R$:}
\begin{align}
  \Delta_\alpha^R f(x)
  &=
    \lim_{\eps\downarrow0}
    \int_{D_R\setminus B(x,\eps)}
      \bigl(f(y)-f(x)\bigr)
      \frac{c_{d,\alpha}}{|y-x|^{d+\alpha}}\,\dd y \notag\\
  &\quad
    -f(x)\int_{D_R^c}
      \frac{c_{d,\alpha}}{|y-x|^{d+\alpha}}\,\dd y.
     % \qquad \revision{x\in D_R.}
  \label{e2.3}
\end{align}
The above is the Dirichlet fractional Laplacian on $D_R$;
see Fukushima, Oshima and Takeda \cite{FOT94}.  Define its killing
rate by
\begin{equation}\label{e2.4}
  f_R(x)
  =\int_{D_R^c}\frac{c_{d,\alpha}}{|y-x|^{d+\alpha}}\,\dd y,
  \qquad x\in D_R.
\end{equation}
Then
\begin{equation}\label{e2.5}
  (\Delta_\alpha-\Delta_\alpha^R)f(x)
  =\int_{D_R^c}
     f(y)\frac{c_{d,\alpha}}{|y-x|^{d+\alpha}}\,\dd y
\end{equation}
\revision{and, by \eqref{e2.3},
$\Delta_\alpha^R1(x)=-f_R(x)$ for $x\in D_R$.}

Assume for the moment that
\begin{equation}\label{e2.6}
  S(X_0)\subseteq\overline{D_{R/2}}.
\end{equation}
Let $V^R=(V_t^R,t\geq0)$ be the superprocess starting from $X_0$
whose spatial motion is generated by $\Delta_\alpha^R$.  Thus
\begin{equation}\label{e2.7}
  V_t^R(f)
  =X_0(f)+M_t^{V^R}(f)
    +\int_0^tV_s^R(\Delta_\alpha^R f)\,\dd s
\end{equation}
for functions in the domain of $\Delta_\alpha^R$.
\revision{\deleted{By localization,}
The equality \eqref{e2.7} for $f\in C_b^2(\R^d)$, with
$\Delta_\alpha^Rf$ given by \eqref{e2.3}, is supplied by
Hong--Mytnik \cite[Section~2.1, (2.6), p.~476]{HM25}.
Here $M^{V^R}(f)$ is a continuous local martingale with
\[
  \langle M^{V^R}(f)\rangle_t
  =\int_0^tV_s^R(f^2)\,\dd s.
\]
The required drift integrability in our setting follows from
\eqref{e2.5}, the bound
\[
  |\Delta_\alpha^Rf|
  \leq\norm{\Delta_\alpha f}_\infty+\norm f_\infty f_R,
\]
Proposition~\ref{p2.1}, and the first-moment formula for $V^R$.}
In particular,
\[
  S(V_t^R)\subseteq\overline{D_R},\qquad t\geq0.
\]

For bounded Borel $f$, set
\begin{equation}\label{e2.8}
  A_t^R(f)
  =
  \int_0^t\dd s\int_{D_R}V_s^R(\dd x)
  \int_{D_R^c}
    f(y)\frac{c_{d,\alpha}}{|y-x|^{d+\alpha}}\,\dd y.
\end{equation}
The cumulative total immigration is
\begin{equation}\label{e2.9}
  A_t^R=A_t^R(1)=\int_0^tV_s^R(f_R)\,\dd s.
\end{equation}
\begin{proposition}\label{p2.1}
Let $d\geq1$, $\alpha\in(0,2)$, and suppose
\eqref{e2.6} holds.  There is
$C=C(d,\alpha)<\infty$ such that
\begin{equation}\label{e2.10}
  \E_{X_0}[A_t^R]
  =\int_{\R^d}\P_x(\tau_R\leq t)\,X_0(\dd x)
  \leq CX_0(1)(tR^{-\alpha}\wedge1),
  \qquad t\geq0,
\end{equation}
where $\tau_R$ is the first exit time of the symmetric stable
process from $D_R$.  In particular, $A^R$ is almost surely finite
on compact time intervals and has continuous increasing paths.
\end{proposition}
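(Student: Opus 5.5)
We will compute $\E_{X_0}[A_t^R]$ through the first-moment formula for the killed superprocess and then bound the exit probability by scaling.

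\textbf{Step 1 (first moment).} The first-moment formula for $V^R$ gives $\E_{X_0}[V_s^R(g)]=X_0(P_s^Rg)$ for bounded nonnegative Borel $g$ on $D_R$. For unbounded $g\ge0$ we apply it to $g\wedge n$ and let $n\to\infty$ by monotone convergence. Tonelli then gives
\[
  \E_{X_0}[A_t^R]=\int_0^t X_0(P_s^Rf_R)\,\dd s
  =\int X_0(\dd x)\,\E_x\Bigl[\int_0^{t\wedge\tau_R}f_R(Y_s)\,\dd s\Bigr].
\]

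\textbf{Step 2 (exit probability).} We identify the inner expectation with $\P_x(\tau_R\le t)$. By the Ikeda--Watanabe formula, the stable process leaves the open set $D_R$ by a jump almost surely, and the rate of jumping from $x$ into $D_R^c$ is exactly $f_R(x)$ from \eqref{e2.4}. Hence
\[
  \P_x(\tau_R\le t)=\E_x\Bigl[\int_0^{t\wedge\tau_R}f_R(Y_s)\,\dd s\Bigr].
\]
Equivalently, $1_{\{\tau_R\le t\}}-\int_0^{t\wedge\tau_R}f_R(Y_s)\dd s$ is a martingale, namely the compensator of the single jump out of $D_R$. Alternatively, one can use $\Delta_\alpha^R1=-f_R$ together with Dynkin's formula for the killed semigroup, $P_t^R1(x)=1-\int_0^tP_s^Rf_R(x)\,\dd s$, which gives the same identity. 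This proves the equality in \eqref{e2.10}.

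\textbf{Step 3 (bound).} Trivially $\P_x(\tau_R\le t)\le1$. For $x\in\overline{D_{R/2}}$, the event $\tau_R\le t$ forces $\sup_{s\le t}|Y_s-Y_0|_\infty\ge R/2$. By self-similarity,
\[
  \P_0\Bigl(\sup_{s\le t}|Y_s|\ge R/2\Bigr)=\P_0\Bigl(\sup_{s\le1}|Y_s|\ge \tfrac{R}{2}t^{-1/\alpha}\Bigr).
\]
The standard maximal tail bound $\P_0(\sup_{s\le1}|Y_s|\ge \lambda)\le C\lambda^{-\alpha}$ gives $\P_x(\tau_R\le t)\le C\,tR^{-\alpha}$. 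This tail bound follows from the reflection (L\'evy) inequality for symmetric processes, $\P(\sup_{s\le 1}|Y_s|\ge\lambda)\le 2\P(|Y_1|\ge\lambda)$, together with the tail $\P(|Y_1|\ge\lambda)\sim c\lambda^{-\alpha}$. Integrating against $X_0$ yields $C X_0(1)(tR^{-\alpha}\wedge1)$.

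\textbf{Step 4 (path properties).} Since $A^R$ is nonnegative and nondecreasing, finiteness of its expectation on every compact interval gives almost sure finiteness on all of $[0,\infty)$. It is increasing because it is the integral of a nonnegative integrand. Continuity follows from absolute continuity of a finite integral $\int_0^t V_s^R(f_R)\,\dd s$, whose integrand is in $L^1_{\mathrm{loc}}$ almost surely.

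\textbf{Main obstacle.} The main obstacle is Step 2, since $f_R$ blows up at $\partial D_R$ and is unbounded, so $1\notin\mathcal D(\Delta_\alpha^R)$ in the $C_0$ sense. We would handle this with the L\'evy system (jump compensator) argument applied to the bounded functional counting exits, which avoids domain issues. Positivity of $f_R$ allows monotone convergence throughout.
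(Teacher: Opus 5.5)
Your proof is correct, but it reaches the identity $\E_{X_0}[A_t^R]=\int\P_x(\tau_R\leq t)\,X_0(\dd x)$ by a different route from the paper.

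\textbf{The paper's route.} It works at the level of the superprocess. It takes $f\equiv1$ in the martingale problem for $V^R$ (quoted from Hong--Mytnik), which gives $V_t^R(1)=X_0(1)+M_t^{V^R}(1)-A_t^R$. It notes that $\E_{X_0}\langle M^{V^R}(1)\rangle_t\leq tX_0(1)$, so $M^{V^R}(1)$ is a mean-zero martingale. Taking expectations then gives $X_0(1)-X_0(P_t^R1)$.

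\textbf{Your route.} You apply the first-moment formula directly to the unbounded killing rate $f_R$. This reduces the problem to the one-particle identity $\int_0^tP_s^Rf_R\,\dd s=1-P_t^R1$ for $x\in D_R$, which you prove with the L\'evy system.

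\textbf{Comparison.} The paper's route is shorter, given the cited decomposition. Yours avoids the martingale problem for $V^R$ with an unbounded drift. It also makes explicit the one geometric input hidden in that citation: the stable process a.s.\ does not leave $D_R$ through $\partial D_R$. Without that, mass would disappear from $V^R$ without being recorded in $A^R$.

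\textbf{Attribution of the no-boundary-exit fact.} This fact does not follow from the Ikeda--Watanabe formula, which only describes exits into $\overline{D_R}^{\,c}$. It also fails for general open sets: for $d=1<\alpha$ and $D=(-1,1)\setminus\{0\}$, the point $0$ is hit continuously. For the cube it does hold. One reason is that each coordinate $Y^{(j)}$ is a one-dimensional symmetric $\alpha$-stable process, which does not creep. A boundary exit would force some coordinate to land exactly on $\pm R$ at its own first passage time, so $\P_x(Y_{\tau_R}\in\partial D_R)=0$. Alternatively, you can cite the known results for Lipschitz domains.

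Your Steps 3 and 4 coincide with the paper's argument. The only difference is that you apply L\'evy's inequality to the Euclidean norm rather than coordinatewise with a union bound, which is equally valid.
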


\begin{proof}
\revision{By Hong--Mytnik
\cite[Section~2.1, (2.6), p.~476]{HM25}, applied with $f\equiv1$,
\[
  V_t^R(1)=X_0(1)+M_t^{V^R}(1)-A_t^R,
  \qquad
  \langle M^{V^R}(1)\rangle_t=\int_0^tV_s^R(1)\,\dd s.
\]
The factor $c_{d,\alpha}$ is included in our definition of $f_R$.
By the first-moment formula and the sub-Markov property of the
killed semigroup,
\[
  \E_{X_0}\bigl[\langle M^{V^R}(1)\rangle_t\bigr]
  =\int_0^tX_0(P_s^R1)\,\dd s
  \leq tX_0(1)<\infty.
\]
Consequently $M^{V^R}(1)$ is a square-integrable martingale with
mean zero. Taking expectations gives
\[
  \E_{X_0}[A_t^R]
  =X_0(1)-X_0(P_t^R1)
  =\int_{\R^d}\P_x(\tau_R\leq t)\,X_0(\dd x).
\]
This argument uses no square-integrability estimate for the
instantaneous immigration rate.}
If $x\in\overline{D_{R/2}}$, then $\tau_R\leq t$ implies that, for
some $1\leq j\leq d$,
\[
  \sup_{0\leq s\leq t}
  |Y_s^{(j)}-Y_0^{(j)}|\geq R/2.
\]
Therefore, L\'evy's maximal inequality for symmetric L\'evy
processes and a union bound yield
\[
  \sup_{x\in\overline{D_{R/2}}}\P_x(\tau_R\leq t)
  \leq
  2\sum_{j=1}^d\P_0(|Y_t^{(j)}|\geq R/2).
\]
By stable scaling and
$\P_0(|Y_1^{(j)}|>u)\leq Cu^{-\alpha}$ for $u\geq1$,
\[
  \sup_{x\in\overline{D_{R/2}}}\P_x(\tau_R\leq t)
  \leq C(tR^{-\alpha}\wedge1).
\]
This proves \eqref{e2.10}; see Bertoin
\cite[Chapter~III]{Ber96} for the maximal inequality and the stable
tail estimate.
\revision{Finally, \eqref{e2.9} and the finiteness of
$\E_{X_0}[A_t^R]$ give a common probability-one event on which
$A_n^R<\infty$ for every integer $n\geq1$.
On this event the defining time integral is locally absolutely
continuous, so $A^R$ has continuous increasing paths.}
\end{proof}

\begin{proposition}\label{p2.2}
Under \eqref{e2.6}, $X$ and $V^R$ can be constructed
on an extension of the probability space together with a
measure-valued process $W^R$ such that
\begin{equation}\label{e2.11}
  X_t=V_t^R+W_t^R,\qquad t\geq0.
\end{equation}
Conditionally on $V^R$, $W^R$ is a Dawson--Watanabe superprocess
with measure-valued immigration $A^R$ from \eqref{e2.8}.  In
particular, conditionally on $V^R$, its total mass
$Z_t=W_t^R(1)$ has the law of the unique nonnegative solution of
\begin{equation}\label{e2.12}
  Z_t=\int_0^t\sqrt{Z_s}\,\dd B_s+A_t^R,\qquad Z_0=0.
\end{equation}
\end{proposition}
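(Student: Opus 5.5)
The plan is to build the decomposition \eqref{e2.11} from a branching-particle or historical picture, and then derive the conditional description of $W^R$ from the martingale problem together with a Laplace-functional computation.

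\textbf{Step 1: construction.} Realize $X$ on a probability space carrying its historical process, or equivalently as a limit of branching $\alpha$-stable particle systems. Then split the population at each time according to whether the ancestral path has left $D_R$. Define $V_t^R$ as the mass whose ancestral path stayed in $D_R$ on $[0,t]$, and set $W_t^R=X_t-V_t^R$. On the particle level it is clear that $V^R$ is a branching system with motion killed on exiting $D_R$. Each exiting particle lands at a point $y\in D_R^c$ at rate $c_{d,\alpha}|y-x|^{-d-\alpha}\,\dd y$, started from $x$. From that moment it and its descendants evolve as an unkilled branching stable system. Passing to the high-density limit gives $V^R$ as the $\Delta_\alpha^R$-superprocess started from $X_0$, which is consistent with \eqref{e2.7}.

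\textbf{Step 2: the martingale problem for $W^R$.} Subtract \eqref{e2.7} from \eqref{e1.1} and use \eqref{e2.5}. For $f\in C_b^2(\R^d)$ this gives
\[
W_t^R(f)=M_t(f)-M_t^{V^R}(f)+\int_0^tW_s^R(\Delta_\alpha f)\,\dd s+A_t^R(f).
\]
The term $\int_0^t V_s^R((\Delta_\alpha-\Delta_\alpha^R)f)\,\dd s$ is exactly $A_t^R(f)$ by \eqref{e2.8}.

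The branching structure of the construction makes $V^R$ and the martingale part driving $W^R$ orthogonal. In particular, $\langle M(f)\rangle=\langle M^{V^R}(f)\rangle+\int W^R_s(f^2)\,\dd s$, so $N(f):=M(f)-M^{V^R}(f)$ has quadratic variation $\int_0^tW_s^R(f^2)\,\dd s$. One also needs that $N(f)$ remains a martingale when we condition on $V^R$.

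\textbf{Step 3: identifying the conditional law.} A cleaner route is through Laplace functionals. For $\phi\ge0$, let $u$ solve $\partial_t u=\Delta_\alpha u-u^2/2$. The log-Laplace functional of $X_t$ splits as follows.
\begin{itemize}
\item The contribution of paths that never exit $D_R$ gives the killed superprocess.
\item The exit contribution, by the Poissonian/branching structure, is $\exp\{-\int_0^t\int A^R(\dd s,\dd y)\,u_{t-s}(y)\}$ conditional on $V^R$.
\end{itemize}
This is exactly the Laplace functional of a Dawson--Watanabe superprocess with immigration $A^R(\dd s,\dd y)$ started from zero, as in Li \cite{Li02}. Continuity and local finiteness of $A^R$ come from Proposition~\ref{p2.1}.

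\textbf{Step 4: total mass.} Take $f\equiv1$ in the conditional martingale problem. Conditionally on $V^R$, this gives $Z_t=N_t(1)+A_t^R$ with $\langle N(1)\rangle_t=\int_0^tZ_s\,\dd s$. A martingale representation, on a further extension, produces $B$ with $N_t(1)=\int_0^t\sqrt{Z_s}\,\dd B_s$.

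Pathwise uniqueness for \eqref{e2.12} holds with a continuous increasing deterministic drift, by Yamada--Watanabe, since $\sqrt z$ is $1/2$-Hölder. Hence the conditional law is that of the unique solution.

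\textbf{Main obstacle.} The delicate point is Step 3: making rigorous the conditioning on $V^R$, that is, showing $N(f)$ is a conditional martingale given all of $V^R$. I would handle this by doing the computation in the particle approximation, where the independence of descendants after exit is explicit. Then I would pass to the limit using tightness and the Laplace-functional characterization, citing the analogous construction in \cite{HM25}.
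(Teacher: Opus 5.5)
Your route runs in the opposite direction from the paper's. You start from a given $X$ and split it by ancestry into the never-exited mass $V^R$ and the remainder $W^R=X-V^R$. You then have to show that, conditionally on the whole path of $V^R$, this remainder is an immigration superprocess.

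The paper instead builds the pieces and glues them, which is why the statement speaks of a construction on an extended space. It takes the killed superprocess $V^R$ and, given $V^R$, treats $A^R$ as a deterministic locally finite immigration path (Proposition~\ref{p2.1}). It then \emph{defines} $W^R$ conditionally on $V^R$ by the Poisson cluster construction, so the conditional description of $W^R$ holds by construction. What remains is to check that $V^R+W^R$ solves \eqref{e1.1}--\eqref{e1.2}. This uses the drift identity $V_s^R(\Delta_\alpha^Rf)\,\dd s+\dd A_s^R(f)=V_s^R(\Delta_\alpha f)\,\dd s$, which is your Step~2 computation read backwards. It also uses orthogonality of $M^{V^R}(f)$ and $M^{W^R}(f)$ in the common filtration, taken from \cite{HM25}. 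Uniqueness of that martingale problem then identifies the sum with $X$. The total-mass part is your Step~4.

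Your approach gives a pathwise decomposition of the given $X$, with $V^R$ literally the mass that has not left $D_R$. Its cost is exactly the step you flag, and as written that step is an assertion, not an argument:
\begin{itemize}
\item As you note, orthogonality in the common filtration does not by itself make $N(f)$ a martingale in the filtration enlarged by $\sigma(V^R)$.
\item The one-time conditional Laplace functional written in Step~3 would not identify the law of the process. You would need multi-time functionals or a conditional Markov property.
\item The particle-limit argument that is supposed to supply these is not carried out.
\end{itemize}
The paper's ordering removes this obstacle entirely and reduces the proof to uniqueness in law for $X$.
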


\begin{proof}
Condition on $V^R$. Given this path, $A^R$ in \eqref{e2.8}
is a locally finite deterministic measure-valued immigration
path \revision{(by Proposition~\ref{p2.1})}.  The
Poisson cluster construction for a Dawson--Watanabe superprocess with
immigration therefore yields a process $W^R$ satisfying, 
%for bounded
%test functions in the domain of $\Delta_\alpha$,
\revision{for $f\in C_b^2(\R^d)$,}
\begin{equation}\label{e2.13}
  W_t^R(f)
  =M_t^{W^R}(f)
   +\int_0^tW_s^R(\Delta_\alpha f)\,\dd s
   +A_t^R(f),
\end{equation}
with
\[
  \langle M^{W^R}(f)\rangle_t
  =\int_0^tW_s^R(f^2)\,\dd s.
\]
\revision{In the common filtered construction of
\cite[(2.10) and the following paragraph, p.~477]{HM25}, the
martingales $M^{V^R}(f)$ and $M^{W^R}(f)$ are orthogonal.}  By
\eqref{e2.5} and \eqref{e2.8},
\[
  V_s^R(\Delta_\alpha^R f)\,\dd s+\dd A_s^R(f)
  =V_s^R(\Delta_\alpha f)\,\dd s.
\]
Hence $V^R+W^R$ solves the Dawson--Watanabe martingale problem
\eqref{e1.1}, with bracket
\[
  \int_0^t(V_s^R+W_s^R)(f^2)\,\dd s.
\]
Uniqueness of the martingale problem therefore allows us to realize
$X$ by $X=V^R+W^R$ on the enlarged space; see Perkins
\cite[Section~II.5]{Per02} and Li \cite[Chapter~9]{Li11} for
superprocesses with immigration.

Taking $f\equiv1$ in \eqref{e2.13} and using
$\Delta_\alpha1=0$, the standard total-mass characterization of a
Dawson--Watanabe superprocess with immigration shows that, conditionally
on $V^R$, $W^R(1)$ is a Feller branching diffusion with immigration path
$A^R$.  Equivalently, it has the law of the unique nonnegative solution
of \eqref{e2.12}; see Li \cite[Chapters~3 and 9]{Li11}.
\end{proof}

\section{Feller branching diffusion with immigration}

Throughout this section, let
$(\Omega,\cF,(\cF_t)_{0\leq t\leq1},\P)$ be a filtered probability
space satisfying the usual conditions and carrying an
$(\cF_t)$-Brownian motion $B$.  Let $a$ be a deterministic continuous
increasing function on $[0,1]$ with $a(0)=0$, and let $Z$ be the
unique nonnegative $(\cF_t)$-adapted continuous solution of
\begin{equation}\label{e3.1}
  Z_t=\int_0^t\sqrt{Z_s}\,\dd B_s+a(t),\qquad Z_0=0.
\end{equation}  Write
\[
  \cZ=\Zeros(Z)\cap[0,1].
\]
\subsection{Two-point estimate}

For $\lambda\geq0$, define
\begin{equation}\label{e3.2}
  u_t(\lambda)=\frac{2\lambda}{2+\lambda t}.
\end{equation}
Then $\partial_tu_t=-u_t^2/2$ and $u_0(\lambda)=\lambda$.

\begin{lemma}\label{l3.1}
For $0<t<t'\leq1$ and $\lambda,\gamma\geq0$,
\begin{equation}\label{e3.3}
  \E[e^{-\lambda Z_t}]
  =
  \exp\left\{
    -\int_0^tu_{t-s}(\lambda)\,\dd a(s)
  \right\},
\end{equation}
and
\begin{align}
  \E[e^{-\lambda Z_t-\gamma Z_{t'}}]
  &=
  \exp\Bigg\{
    -\int_0^t
       u_{t-s}\bigl(\lambda+u_{t'-t}(\gamma)\bigr)\,\dd a(s)
    \notag\\
  &\hspace{5em}
    -\int_t^{t'}u_{t'-s}(\gamma)\,\dd a(s)
  \Bigg\}.
  \label{e3.4}
\end{align}
\end{lemma}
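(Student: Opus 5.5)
The plan is to prove \eqref{e3.3} with an exponential martingale built from the backward Riccati solution $u$, and then get \eqref{e3.4} from \eqref{e3.3} via the Markov property.

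\emph{Step 1: proof of \eqref{e3.3}.} Fix $t\in(0,1]$ and $\lambda\geq0$. For $0\leq s\leq t$ put $\phi(s)=u_{t-s}(\lambda)$, so that $\phi'(s)=\phi(s)^2/2$ and $\phi(t)=\lambda$. Consider
\[
  N_s=\exp\Bigl\{-\phi(s)Z_s-\int_s^t\phi(r)\,\dd a(r)\Bigr\},\qquad 0\leq s\leq t.
\]
The process $Z$ is a continuous semimartingale whose finite-variation part is $a$, and $a$ is continuous. Hence It\^o's formula applies, and the Stieltjes term $\phi(s)\,\dd a(s)$ from $\dd Z_s$ cancels exactly with the derivative of the integral in the exponent. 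This gives
\[
  \dd N_s=N_s\Bigl(-\phi'(s)Z_s\,\dd s-\phi(s)\sqrt{Z_s}\,\dd B_s+\tfrac12\phi(s)^2Z_s\,\dd s\Bigr)=-N_s\phi(s)\sqrt{Z_s}\,\dd B_s .
\]
Thus $N$ is a local martingale. Since $\phi\geq0$ and $Z\geq0$, we have $0\leq N\leq1$, so $N$ is a bounded, hence true, martingale. Therefore $\E[N_t]=N_0$. Using $Z_0=0$, $\phi(t)=\lambda$, and $\phi(s)=u_{t-s}(\lambda)$, this identity is exactly \eqref{e3.3}.

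\emph{Step 2: conditional version.} The same argument on the interval $[t,t']$, started from $Z_t$ instead of $0$, gives
\[
  \E\bigl[e^{-\gamma Z_{t'}}\mid\cF_t\bigr]=\exp\Bigl\{-u_{t'-t}(\gamma)Z_t-\int_t^{t'}u_{t'-s}(\gamma)\,\dd a(s)\Bigr\}.
\]
Concretely, the process $N$ built with $\phi(s)=u_{t'-s}(\gamma)$ is a bounded martingale on $[0,t']$, and $\E[N_{t'}\mid\cF_t]=N_t$ yields this identity directly. It requires no appeal to a Markov property.

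\emph{Step 3: proof of \eqref{e3.4}.} Multiply the conditional identity of Step 2 by $e^{-\lambda Z_t}$ and take expectations. This gives
\[
  \E[e^{-\lambda Z_t-\gamma Z_{t'}}]=\exp\Bigl\{-\int_t^{t'}u_{t'-s}(\gamma)\,\dd a(s)\Bigr\}\,\E\bigl[e^{-(\lambda+u_{t'-t}(\gamma))Z_t}\bigr].
\]
Applying \eqref{e3.3} with $\lambda$ replaced by $\lambda+u_{t'-t}(\gamma)$ then gives \eqref{e3.4}.

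\emph{Main technical point.} The delicate step is the justification of It\^o's formula with a merely continuous increasing (possibly singular) $a$, that is, the claim that the $\dd a$-terms cancel exactly. This holds because $Z-a$ is a continuous local martingale and $a$ is continuous and of finite variation. Consequently $\langle Z\rangle_s=\int_0^s Z_r\,\dd r$, and the product rule for $e^{-\phi(s)Z_s}$ times $e^{-\int_s^t\phi\,\dd a}$ involves no covariation term. Everything else is the explicit verification that $\phi'=\phi^2/2$, which holds by the definition \eqref{e3.2}.
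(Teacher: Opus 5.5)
Your proof is correct. For \eqref{e3.3} it is the paper's argument. The paper uses $N_s=\exp\{-v(s)Z_s+\int_0^s v(r)\,\dd a(r)\}$ with $v(s)=u_{t-s}(\lambda)$, which differs from your $N$ only by the deterministic factor $\exp\{-\int_0^t v\,\dd a\}$; this is why you get $0\leq N\leq 1$ where the paper has $e^{\lambda a(t)}$. Your final step, multiplying by $e^{-\lambda Z_t}$ and reapplying \eqref{e3.3} at level $\lambda+u_{t'-t}(\gamma)$, is also the paper's.

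The one genuine difference is how you obtain $\E[e^{-\gamma Z_{t'}}\mid\cF_t]$. The paper invokes the Markov and branching properties. It splits the mass at $t'$ into a Feller diffusion started from $Z_t$ and an independent immigration-from-zero piece driven by $a(\cdot)-a(t)$, then evaluates the second factor via \eqref{e3.3} for the shifted path. You instead read the formula off directly from $\E[N_{t'}\mid\cF_t]=N_t$ for the bounded martingale built with $u_{t'-s}(\gamma)$ on $[0,t']$.

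Your route is more self-contained. It uses only It\^o's formula and never takes the branching property of the time-inhomogeneous process (with possibly singular $a$) as an input. Since Laplace transforms determine laws on $[0,\infty)$, your conditional identity actually proves that, given $\cF_t$, $Z_{t'}$ is distributed as the sum of those two independent pieces. That is exactly the form of the decomposition used in Proposition~\ref{p3.2}. The paper's route, in turn, makes the probabilistic meaning of the two factors explicit and sets up that same decomposition for Proposition~\ref{p3.2}.
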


The proof  is given
in Appendix \ref{a1}.  For $\eps>0$ and $0<t\leq 1$, put
\[
  q_\eps(t)=\P(Z_t\leq\eps).
\]
The quantity $q_\eps(t)$ is strictly positive. See the proof of Lemma \ref{l3.1} in Appendix
\ref{a1}.

\begin{proposition}\label{p3.2}
For $0<t<t'\leq1$ and $\eps>0$,
\begin{equation}\label{e3.5}
  \frac{\P(Z_t\leq\eps,Z_{t'}\leq\eps)}
       {q_\eps(t)q_\eps(t')}
  \leq
  \exp\left\{
    2\int_0^t\frac{\dd a(s)}{t'-s}
  \right\}.
\end{equation}
\end{proposition}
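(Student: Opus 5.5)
The plan is to exploit the branching/immigration structure of $Z$: split the mass at time $t'$ into the part descending from immigration before time $t$ and the part coming from immigration during $(t,t']$, and compare each with a single-time probability. The factor $\exp\{2\int_0^t \dd a(s)/(t'-s)\}$ should appear as the reciprocal of the probability that all descendants of the immigration before time $t$ are extinct by time $t'$. This uses $u_{r}(\lambda)\uparrow 2/r$ as $\lambda\to\infty$.

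\textbf{Step 1 (conditional structure).} First I would identify the conditional law of $Z_{t'}$ given $\cF_t$. Comparing \eqref{e3.4} with \eqref{e3.3} shows that
\[
  \E\bigl[e^{-\gamma Z_{t'}}\mid \cF_t\bigr]
  =\exp\Bigl\{-Z_t\,u_{t'-t}(\gamma)-\int_t^{t'}u_{t'-s}(\gamma)\,\dd a(s)\Bigr\}.
\]
This can be checked either directly from the Markov property of the SDE solution or via the proof in Appendix~\ref{a1}. Hence, given $\cF_t$, $Z_{t'}$ is distributed as $Y_{t'-t}+Z^{(2)}_{t'}$. Here $Y$ is a Feller diffusion without immigration started from $Z_t$, and $Z^{(2)}$ is an independent copy of the equation \eqref{e3.1} driven by $a(\cdot)-a(t)$ on $[t,t']$, started from $0$ at time $t$. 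The law of $Z^{(2)}_{t'}$ does not depend on $\cF_t$, and $Y\geq0$.

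\textbf{Step 2 (upper bound).} Since $Z_{t'}\geq Z^{(2)}_{t'}$ in this coupling,
\[
  \P(Z_t\leq\eps,Z_{t'}\leq\eps)\leq \E\bigl[\1_{\{Z_t\leq\eps\}}\P(Z^{(2)}_{t'}\leq\eps)\bigr]
  =q_\eps(t)\,\P(Z^{(2)}_{t'}\leq\eps).
\]

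\textbf{Step 3 (lower bound for $q_\eps(t')$).} By \eqref{e3.3} and the additivity of $\int u\,\dd a$ over disjoint time sets, $Z_{t'}$ has the law of $Z^{(2)}_{t'}+Z^{(3)}_{t'}$. Here $Z^{(3)}_{t'}$ is independent of $Z^{(2)}_{t'}$ and has Laplace transform $\exp\{-\int_0^t u_{t'-s}(\lambda)\,\dd a(s)\}$. Therefore
\[
  q_\eps(t')\geq \P(Z^{(2)}_{t'}\leq\eps)\,\P(Z^{(3)}_{t'}=0).
\]
Letting $\lambda\to\infty$ with monotone convergence gives
\[
  \P(Z^{(3)}_{t'}=0)=\exp\Bigl\{-2\int_0^t\frac{\dd a(s)}{t'-s}\Bigr\},
\]
which is positive since $t'-s\geq t'-t>0$. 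Combining this with Step 2 yields \eqref{e3.5}.

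The main obstacle is the rigorous justification of the decomposition in Step 1. It requires checking that the conditional Laplace functional has the stated product form and that it determines the conditional law. I would derive it by applying the same exponential-martingale argument used for Lemma~\ref{l3.1}: $s\mapsto \exp\{-Z_s u_{t'-s}(\gamma)-\int_s^{t'}u_{t'-r}(\gamma)\dd a(r)\}$ is a martingale on $[t,t']$, by It\^o's formula together with $\partial_t u=-u^2/2$. The independent-sum representations then follow from the multiplicativity of Laplace transforms of nonnegative variables.
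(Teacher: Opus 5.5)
Your proposal is correct and follows essentially the same route as the paper: decompose at time $t$ into the descendants of $Z_t$ and an independent post-$t$ immigration part $Z^{(2)}$. You then bound the numerator by $q_\eps(t)\P(Z^{(2)}_{t'}\le\eps)$, and bound $q_\eps(t')$ below by $\P(Z^{(2)}_{t'}\le\eps)$ times the probability that the pre-$t$ part is extinct at $t'$.

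The differences are cosmetic. The paper computes that extinction probability as $\E[e^{-2Z_t/(t'-t)}]$, using \eqref{e3.3} with $\lambda=2/(t'-t)$, rather than by letting $\lambda\to\infty$. Also, your $Z^{(3)}_{t'}$ can simply be taken to be $Y_{t'-t}$ from Step 1, which makes its existence and its independence from $Z^{(2)}_{t'}$ immediate.
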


\begin{proof}
Fix $0<t<t'\leq1$.  On an extension of the probability space, let
$B^1$ and $B^2$ be independent standard Brownian motions,
independent of $\cF_t$.  For $u\in[t,t']$, let $Z^1$ and $Z^2$
be the unique nonnegative solutions of
\begin{align}
  Z_u^1
  &=Z_t+\int_t^u\sqrt{Z_s^1}\,\dd B_s^1,
  \qquad Z_t^1=Z_t,                                      \label{e3.5c}\\
  Z_u^2
  &=\int_t^u\sqrt{Z_s^2}\,\dd B_s^2+a(u)-a(t),
  \qquad Z_t^2=0.                                        \label{e3.5d}
\end{align}
By the branching property, conditionally on $\cF_t$,
\[
  (Z_u)_{t\leq u\leq t'}
  \stackrel{d}{=}
  (Z_u^1+Z_u^2)_{t\leq u\leq t'}.
\]
Moreover, $Z^2$ is independent of $\cF_t$ and of $Z^1$.
Hence
\begin{equation}\label{e3.5a}
  \P(Z_t\leq\eps,Z_{t'}\leq\eps) \leq \P(Z_t\leq\eps,\ Z_{t'}^2\leq\eps)
  = q_\eps(t)\P(Z_{t'}^2\leq\eps).
\end{equation}
Also,
\begin{align}\label{e3.5b}
  q_\eps(t') \geq  \P(Z_{t'}^1=0,\ Z_{t'}^2\leq\eps)  =\P(Z_{t'}^1=0)\P(Z_{t'}^2\leq\eps).                 
\end{align}
Conditionally on $Z_t=z$, $Z^1$ is a Feller branching diffusion
started from $z$.  Thus
\[
  \P(Z_{t'}^1=0\mid Z_t=z)
  =\exp\left\{-\frac{2z}{t'-t}\right\},
\]
and hence, by \eqref{e3.3} with $\lambda=2/(t'-t)$,
\begin{align*}
  \P(Z_{t'}^1=0) =\E\left[\exp\left\{-\frac{2Z_t}{t'-t}\right\}\right] =\exp\left\{-2\int_0^t\frac{\dd a(s)}{t'-s}\right\}.
\end{align*}
Combining this with \eqref{e3.5a} and \eqref{e3.5b} gives
\eqref{e3.5}.
\end{proof}

\subsection{Maximal estimate}

For $t>0$, define the left maximal slope
\begin{equation}\label{e3.6}
  M_a(t)
  =\sup_{0<h\leq t}\frac{a(t)-a(t-h)}{h}.
\end{equation}
Given $0<\delta<1/4$, let
\begin{equation}\label{e3.7}
  G_\delta(a)=\{t\in(0,1]:M_a(t)\leq\delta\}.
\end{equation}

\begin{lemma}\label{l3.3}
If $0<t<t'\leq1$ and $t'\in G_\delta(a)$, then
\begin{equation}\label{e3.8}
  \exp\left\{
    2\int_0^t\frac{\dd a(s)}{t'-s}
  \right\}
  \leq e^{2\delta}|t'-t|^{-2\delta}.
\end{equation}
\end{lemma}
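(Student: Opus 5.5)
We need to bound $\int_0^t \frac{\dd a(s)}{t'-s}$ by $\delta + \delta\log(1/(t'-t))$, given $M_a(t')\le\delta$. The plan is to rewrite the Stieltjes integral in terms of the left increments of $a$ at $t'$ and then apply the maximal bound.

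Set $h=t'-t\in(0,1)$ and write the left increment $F(r)=a(t')-a(t'-r)$ for $0\le r\le t'$. This $F$ is continuous, increasing, $F(0)=0$, and the hypothesis $t'\in G_\delta(a)$ says exactly that $F(r)\le\delta r$ for all $0<r\le t'$. Substituting $r=t'-s$, the measure $\dd a(s)$ on $[0,t]$ becomes $\dd F(r)$ on $[h,t']$, so
\[
  \int_0^t\frac{\dd a(s)}{t'-s}=\int_{[h,t']}\frac{\dd F(r)}{r}.
\]
Since $a$ is continuous there are no atoms, so endpoint conventions are irrelevant.

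Next I integrate by parts (Stieltjes, with the continuous $F$ and the $C^1$ function $1/r$ on $[h,t']$):
\[
  \int_{h}^{t'}\frac{\dd F(r)}{r}
  =\frac{F(t')}{t'}-\frac{F(h)}{h}+\int_h^{t'}\frac{F(r)}{r^2}\,\dd r
  \le \frac{F(t')}{t'}+\int_h^{t'}\frac{F(r)}{r^2}\,\dd r ,
\]
where I drop $-F(h)/h\le0$. Using $F(r)\le\delta r$, the boundary term is at most $\delta$ and the integral is at most $\delta\int_h^{t'}r^{-1}\dd r=\delta\log(t'/h)\le\delta\log(1/h)$, since $t'\le1$. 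Hence the integral is at most $\delta+\delta\log(1/|t'-t|)$. Multiplying by $2$ and exponentiating gives $e^{2\delta}|t'-t|^{-2\delta}$, which is \eqref{e3.8}.

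The only point needing care is the integration-by-parts step for a Stieltjes measure that may be singular continuous. I would justify it by Fubini instead of a formula for regular functions: write $1/r=1/t'+\int_r^{t'}u^{-2}\dd u$ and exchange the order of integration, which is valid because everything is nonnegative. This gives the displayed identity directly, with $F(u)-F(h)$ in place of $F(u)$ in the inner integral, and that is still bounded by $\delta u$. So the estimate holds with no regularity on $a$ beyond continuity and monotonicity.
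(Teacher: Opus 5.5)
Your proof is correct and follows the paper's argument essentially verbatim: you substitute the left increment $F(r)=a(t')-a(t'-r)$, integrate by parts in the Stieltjes sense, discard $-F(t'-t)/(t'-t)\le 0$, and bound the result by $\delta+\delta\log\bigl(t'/(t'-t)\bigr)\le\delta+\delta\log\bigl(1/(t'-t)\bigr)$. Your Fubini justification of the integration by parts, using $1/r=1/t'+\int_r^{t'}u^{-2}\,\mathrm{d}u$, is a useful extra detail for singular continuous $a$ that the paper leaves implicit.
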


\begin{proof}
Set $A(r)=a(t')-a(t'-r)$ for $0\leq r\leq t'$. Since we assume $t'\in G_\delta(a)$, by definition we have $A(r)\leq\delta r$ for all $0\leq r\leq t'$.  Stieltjes
integration by parts yields
\begin{align*}
  \int_0^t\frac{\dd a(s)}{t'-s}
  &=
  \int_{t'-t}^{t'}\frac{\dd A(r)}{r} = \left[\frac{A(r)}r\right]_{t'-t}^{t'}
  +\int_{t'-t}^{t'}\frac{A(r)}{r^2}\,\dd r\\
  &\leq
  \delta+\delta\log\frac{t'}{t'-t}
  \leq
  \delta+\delta\log\frac1{t'-t}.
\end{align*}
Then \eqref{e3.8} follows.
\end{proof}

\begin{lemma}\label{l3.4}
For $0<\delta<1/4$,
\begin{equation}\label{e3.9}
  |(0,1]\setminus G_\delta(a)|
  \leq\frac{a(1)}{\delta}.
\end{equation}
\end{lemma}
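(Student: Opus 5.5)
We prove \eqref{e3.9} with the Rising Sun Lemma, i.e.\ a one-sided Hardy--Littlewood maximal inequality for the Stieltjes measure $\dd a$. Set $g(x)=a(x)-\delta x$ on $[0,1]$; it is continuous because $a$ is. For $t\in(0,1]$ we have $M_a(t)>\delta$ if and only if there is $h\in(0,t]$ with $a(t)-a(t-h)>\delta h$, that is, $g(t)>g(s)$ for some $s=t-h\in[0,t)$. So $E:=(0,1]\setminus G_\delta(a)$ is exactly the set of points $t$ that are ``shadowed from the left'': some point to the left of $t$ has a strictly smaller value of $g$. Equivalently, with $m(t)=\min_{[0,t]}g$, we have $E=\{t\in(0,1]:g(t)>m(t)\}$. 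Since $m$ is continuous, $E$ is relatively open in $(0,1]$, so $E\cap(0,1)$ is a countable disjoint union of open intervals $(b_k,c_k)$, plus possibly the single point $1$, which has measure zero.

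Next we identify the structure of each component. On the boundary, $g(b_k)=m(b_k)$ unless $b_k=0$. If $b_k=0$, then $g(0)=0$, and $g(0)\le g$ must hold near $0$, so $g(b_k)=m(b_k)$ again. On $(b_k,c_k)$ the minimum $m$ is constant and equal to $g(b_k)$: if it dropped below this value at some interior point $x$, we would have $g(x)=m(x)$ there, contradicting $x\in E$. At the right endpoint, if $c_k<1$ then $c_k\notin E$, so $g(c_k)=m(c_k)=g(b_k)$. If $c_k=1$, then $g(1)\ge m(1)=g(b_k)$. In every case $g(c_k)\ge g(b_k)$, i.e.\
\[
  \delta(c_k-b_k)\le a(c_k)-a(b_k).
\]

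Summing over the disjoint intervals, and using that $a$ is increasing with $a(0)=0$,
\[
  |E|=\sum_k(c_k-b_k)\le\frac1\delta\sum_k\bigl(a(c_k)-a(b_k)\bigr)\le\frac{a(1)-a(0)}\delta=\frac{a(1)}\delta .
\]
This is \eqref{e3.9}. The restriction $\delta<1/4$ plays no role here.

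The main obstacle is the endpoint bookkeeping: showing that $g(c_k)\ge g(b_k)$ holds, and in the right direction, including at $b_k=0$ and $c_k=1$. The argument above handles this by working with the running minimum $m$ instead of invoking Riesz's lemma verbatim. Alternatively, one can cite \cite[Chapter~3, Corollary~3.6]{SS05} directly for the rising-sun set of $g$, taking care that the shadow there is cast from the left.
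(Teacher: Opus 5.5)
Your proof is correct and is essentially the paper's argument: with $g(t)=a(t)-\delta t$ and its running minimum $m$, both identify $(0,1]\setminus G_\delta(a)$ with $\{g>m\}$, show that $m$ is constant on each component $(b_k,c_k)$ so that $\delta(c_k-b_k)\le a(c_k)-a(b_k)$, and sum over the disjoint components. One small wording fix: if $m$ dropped below $g(b_k)$ at some $x\in(b_k,c_k)$, the contradiction comes from a minimizer $s\in(b_k,x]$ of $g$ on $[0,x]$, which satisfies $g(s)=m(s)$ while $s\in E$; the point need not be $x$ itself.
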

\revision{The proof is given in Appendix~\ref{a1}.}

\subsection{Frostman estimate}

\begin{proposition}\label{p3.5}
Fix $0<\delta<1/4$ and $0<\theta<1-2\delta$.  Let
$I\subset(0,1)$ be an open interval of length $\ell$, and put
\[
  g_I=|I\cap G_\delta(a)|.
\]
If $g_I>0$, then
\begin{equation}\label{e3.10}
  \P\bigl(\dim(\cZ\cap\overline I)\geq\theta\bigr)
  \geq
  c_{\delta,\theta}
  \frac{g_I^2}{\ell^{2-2\delta}},
\end{equation}
where the constant is independent of $a$ and $I$.
\end{proposition}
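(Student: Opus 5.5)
We will use a random-measure (Frostman energy) argument. Fix $\eps>0$ and let $G=I\cap G_\delta(a)$, so $|G|=g_I>0$. Define the random measure
\[
  \mu_\eps(\dd t)=\frac{\1\{Z_t\leq\eps\}}{q_\eps(t)}\,\1_G(t)\,\dd t .
\]
It satisfies $\E[\mu_\eps(\overline I)]=g_I$. The plan is to bound $\E[\mu_\eps(\overline I)^2]$ and the $\theta$-energy
\[
  \E\Big[\iint |t-t'|^{-\theta}\mu_\eps(\dd t)\mu_\eps(\dd t')\Big]
\]
uniformly in $\eps$ and $a$, to apply Paley--Zygmund, and then to pass to a limit along $\eps\downarrow0$.

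\textbf{Second moment and energy.} By Fubini, symmetrize and write both integrals over ordered pairs $t<t'$ in $G\times G$, with a factor $2$. The point of restricting to $G$ is that the \emph{later} time $t'$ lies in $G_\delta(a)$. Proposition~\ref{p3.2} combined with Lemma~\ref{l3.3} then gives
\[
  \frac{\P(Z_t\leq\eps,Z_{t'}\leq\eps)}{q_\eps(t)q_\eps(t')}\leq e^{2\delta}|t'-t|^{-2\delta}.
\]
Hence
\[
  \E\big[\mu_\eps(\overline I)^2\big]\leq 2e^{2\delta}\int_I\int_I|t-t'|^{-2\delta}\,\dd t\,\dd t'\leq C_\delta\,\ell^{2-2\delta},
\]
and similarly
\[
  \E[\mathcal E_\theta(\mu_\eps)]\leq C\int_I\int_I |t-t'|^{-2\delta-\theta}\,\dd t\,\dd t'\leq C_{\delta,\theta}\,\ell^{2-2\delta-\theta}.
\]
This energy bound is finite because $\theta+2\delta<1$. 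Since $\ell\leq1$, the energy is at most $C_{\delta,\theta}\ell^{2-2\delta}\ell^{-\theta}$. I expect to handle the factor $\ell^{-\theta}$ by comparing energy with mass squared, as below.

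\textbf{Paley--Zygmund.} Fix $K$ large and consider the event
\[
  E_\eps=\big\{\mu_\eps(\overline I)\geq g_I/2,\ \mathcal E_\theta(\mu_\eps)\leq K\ell^{2-2\delta-\theta}\big\}.
\]
Paley--Zygmund gives
\[
  \P\big(\mu_\eps(\overline I)\geq g_I/2\big)\geq \tfrac14 g_I^2/\E[\mu_\eps^2]\geq c\,g_I^2/\ell^{2-2\delta}.
\]
Markov's inequality on the energy subtracts at most $C_{\delta,\theta}/K$. This is not directly comparable with $g_I^2/\ell^{2-2\delta}$, which may be small. To fix this, take the energy threshold proportional to the target probability: set $K=K'\ell^{2-2\delta}\cdot \ell^{2-2\delta}/g_I^2$, so that
\[
  \P(E_\eps)\geq c\,g_I^2/\ell^{2-2\delta}
\]
with $c$ depending only on $\delta,\theta$. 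The energy bound is then random but finite, which is all Frostman requires.

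\textbf{Limit as $\eps\downarrow0$ (main obstacle).} Take $\eps_n\downarrow0$. By Fatou (reverse) we get $\P(\limsup E_{\eps_n})\geq c\,g_I^2/\ell^{2-2\delta}$. On this event, extract a subsequence along which $\mu_{\eps_n}$ converges weakly to some $\mu$. The masses are bounded because $q_\eps(t)$ is bounded below on $G$: Lemma~\ref{l3.1} gives $q_\eps(t)\geq 1-e^{-\lambda\eps}\E e^{\lambda Z_t}$-type bounds. More simply, the total masses are bounded above on the event by using the second-moment bound together with an extra truncation $\mu_\eps(\overline I)\leq L$, again via Markov.

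The limit satisfies $\mu(\overline I)\geq g_I/2>0$, and by lower semicontinuity of energy, $\mathcal E_\theta(\mu)<\infty$. By continuity of $Z$, every $t$ in the support of $\mu$ satisfies $Z_t=0$: if $Z_t>0$, then $Z>2\eps_n$ on a neighborhood of $t$ for large $n$, so $\mu_{\eps_n}$ vanishes there. Thus $\mu$ is a nonzero measure on $\cZ\cap\overline I$ with finite $\theta$-energy, and Frostman's lemma yields $\dim(\cZ\cap\overline I)\geq\theta$.

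The delicate points are measurability and making the weak-limit/support argument rigorous, plus keeping all constants independent of $a$ and $I$. This independence holds because only Proposition~\ref{p3.2}, Lemma~\ref{l3.3} and the ordered-pair symmetrization with $t'\in G$ were used.
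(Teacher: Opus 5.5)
Your proposal is essentially the paper's proof: the same weighted occupation measure on $I\cap G_\delta(a)$, the two-point bound from Proposition~\ref{p3.2} and Lemma~\ref{l3.3} with the later time in $G_\delta(a)$, Paley--Zygmund plus Markov on the $\theta$-energy, $\P(\limsup E_n)\geq\limsup\P(E_n)$, a weak limit with a lower semicontinuous kernel, and Frostman. Your open-neighbourhood/Portmanteau argument for the support is an equally valid substitute for the paper's $\int Z\,\dd\nu=0$.

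There are two slips to fix. First, your $K$ carries a spurious extra factor $\ell^{2-2\delta}$. As written, the Markov error $C_{\delta,\theta}/K$ is not $\leq p_I/2$ uniformly in $\ell$, where $p_I=c_\delta g_I^2\ell^{-(2-2\delta)}$. Take the energy threshold to be $2C_{\delta,\theta}\ell^{2-2\delta-\theta}/p_I$, exactly as the paper does, so that the constant stays independent of $I$.

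Second, $q_\eps(t)$ is not bounded below as $\eps\downarrow0$. Indeed $q_\eps(t)\to\P(Z_t=0)=\exp\{-2\int_0^t\dd a(s)/(t-s)\}$, which vanishes, for example, when $a$ has a positive density near $t$. Drop that justification and bound the total masses either by your extra Markov truncation or, more simply, by $\mu_\eps(\overline I)^2\leq\ell^\theta\mathcal E_\theta(\mu_\eps)$ on the event.
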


\begin{proof}
For any $\eps>0$, set
\begin{equation}\label{e3.11}
  \nu_\eps(\dd t)
  =q_\eps(t)^{-1}
   \1_{\{Z_t\leq\eps\}}
   \1_{I\cap G_\delta(a)}(t)\,\dd t.
\end{equation}
Then $\E[\nu_\eps(I)]=g_I$.  By Proposition \ref{p3.2} and
Lemma \ref{l3.3}, for any distinct $s,t\in I\cap G_\delta(a)$,
\[
  \frac{\P(Z_s\leq\eps,Z_t\leq\eps)}
       {q_\eps(s)q_\eps(t)}
  \leq C_\delta |t-s|^{-2\delta}.
\]
Hence
\begin{equation}\label{e3.12}
  \E[\nu_\eps(I)^2]
  \leq C_\delta
  \iint_{I^2}|t-s|^{-2\delta}\,\dd s\,\dd t
  \leq C_\delta\ell^{2-2\delta}.
\end{equation}
Paley--Zygmund gives
\begin{equation}\label{e3.13}
  \P\left(\nu_\eps(I)\geq\frac{g_I}{2}\right)
  \geq p_I,
  \quad \text{ where } \quad
  p_I:=c_\delta\frac{g_I^2}{\ell^{2-2\delta}}.
\end{equation}
For a finite measure $\nu$, write
\[
  \mathcal E_\theta(\nu)
  =\iint |t-s|^{-\theta}\,\nu(\dd s)\nu(\dd t).
\]
Since $\theta+2\delta<1$,
\begin{equation*} 
  \E[\mathcal E_\theta(\nu_\eps)]
  \leq C_{\delta,\theta}
  \iint_{I^2}|t-s|^{-\theta-2\delta}\,\dd s\,\dd t
  \leq C_{\delta,\theta}\ell^{2-\theta-2\delta}.
\end{equation*}
Set
\[
  K_I=\frac{2C_{\delta,\theta}
  \ell^{2-\theta-2\delta}}{p_I}.
\]
Use Markov's inequality to get that
\begin{equation}\label{e3.14}
  \P\left( \mathcal E_\theta(\nu_\eps)> K_I  \right)
  \leq K_I^{-1}\E[\mathcal E_\theta(\nu_\eps)]
  \leq \frac{p_I}{2}.
\end{equation}
Then \eqref{e3.13} and \eqref{e3.14} imply that
\begin{equation}\label{e3.15}
  \P\left(
    \nu_\eps(I)\geq g_I/2,
    \ \mathcal E_\theta(\nu_\eps)\leq K_I
  \right)
  \geq \frac{p_I}{2}.
\end{equation}

Let $\eps_n\downarrow0$ be deterministic and define
\[
  E_n
  =
  \left\{
    \nu_{\eps_n}(I)\geq g_I/2,
    \ \mathcal E_\theta(\nu_{\eps_n})\leq K_I
  \right\}.
\]
By \eqref{e3.15}, $\P(E_n)\geq p_I/2$ for every $n$.  Set
\[
  E_\infty
  :=\limsup_{n\to\infty}E_n
  =\bigcap_{m=1}^\infty\bigcup_{n\geq m}E_n.
\]
Since the events $\bigcup_{n\geq m}E_n$ decrease as $m\to\infty$,
\[
  \P(E_\infty)
  =\lim_{m\to\infty}
    \P\!\left(\bigcup_{n\geq m}E_n\right)
  \geq
  \lim_{m\to\infty}\sup_{n\geq m}\P(E_n)
  \geq \frac{p_I}{2}.
\]
For every $\omega\in E_\infty$, infinitely many of the events
$E_n$ occur.  We may therefore choose an increasing sequence
$n_k=n_k(\omega)\uparrow\infty$ such that $\omega\in E_{n_k}$ for
all $k$.  Replacing $\eps_n$ by the corresponding subsequence
$\eps_{n_k}$, and relabelling it again as $\eps_n$, we have
\[
  \nu_{\eps_n}(I)\geq g_I/2,
  \qquad
  \mathcal E_\theta(\nu_{\eps_n})\leq K_I
  \quad\text{for all }n.
\]
Since
\[
  \nu_{\eps_n}(I)^2
  \leq \ell^\theta\mathcal E_\theta(\nu_{\eps_n})
  \leq \ell^\theta K_I,
\]
the total masses are uniformly bounded.

 We regard
$\nu_{\eps_n}$ as finite measures on the compact interval
$\overline I$.  By weak compactness, after passing to a further
subsequence,
\[
  \nu_{\eps_n}\Longrightarrow\nu
\]
for some finite measure $\nu$ on $\overline I$.  Since the constant
function $1$ is continuous on $\overline I$,
\[
  \nu(\overline I)
  =\lim_{n\to\infty}\nu_{\eps_n}(\overline I)
  \geq g_I/2.
\]

Weak convergence on $\overline I$ implies
\[
  \nu_{\eps_n}\otimes\nu_{\eps_n}
  \Longrightarrow
  \nu\otimes\nu
\]
on $\overline I^2$.  Define
\[
  K_\theta(s,t)=
  \begin{cases}
    |t-s|^{-\theta},&s\neq t,\\
    +\infty,&s=t.
  \end{cases}
\]
The kernel $K_\theta$ is nonnegative and lower semicontinuous.
Hence the Portmanteau theorem gives
\[
  \mathcal E_\theta(\nu)
  \leq
  \liminf_{n\to\infty}
  \mathcal E_\theta(\nu_{\eps_n})
  \leq K_I.
\]
In particular, $\nu$ has no atoms, since any atom of positive mass
would give infinite contribution from the diagonal.  Hence
\[
  \nu(\partial I)=0,
  \qquad
  \nu(I)=\nu(\overline I)\geq g_I/2.
\]

\revision{Since $Z$ is continuous on $\overline I$ and
$\nu_{\eps_n}$ is carried by $\{t:Z_t\leq\eps_n\}$,
\[
  0\leq \int_{\overline I} Z_t\,\nu(dt)
  =\lim_{n\to\infty}
    \int_{\overline I} Z_t\,\nu_{\eps_n}(dt)
  \leq\lim_{n\to\infty}
    \eps_n\nu_{\eps_n}(\overline I)=0.
\]
Hence $\nu(I\setminus\cZ)=0$.}

%Let $t_0\in I\setminus\cZ$.  By continuity of $Z$, there are a
%relative open interval $U\Subset I$ containing $t_0$ and $c>0$ such
%that $Z\geq c$ on $U$.  Hence $\nu_{\eps_n}(U)=0$ for all sufficiently
%large $n$.  Since $U$ is open in $\overline I$, the Portmanteau
%theorem gives
%\[
%  \nu(U)\leq\liminf_{n\to\infty}\nu_{\eps_n}(U)=0.
%
%\revision{By second countability of $I$, these intervals $U$
%admit a countable subcover of $I\setminus\cZ$. Hence
%$\nu(I\setminus\cZ)=0$.}
Thus the nonzero measure $\nu|_I$ is % supported on 
\revision{carried by} $\cZ\cap I$, and
\[
  \mathcal E_\theta(\nu|_I)
  \leq\mathcal E_\theta(\nu)<\infty.
\]
Frostman's energy criterion \cite[Chapter~8]{Mat95} gives
$\dim(\cZ\cap I)\geq\theta$.
\revision{After changing the constant, we have
$p_I/2=c_{\delta,\theta}g_I^2\ell^{-(2-2\delta)}$;
thus \eqref{e3.10} follows.}
\end{proof}

\subsection{Proof of Theorem \ref{t1.4}}

Let $C_+=C([0,\infty),[0,\infty))$ be equipped with the topology
of uniform convergence on compact time intervals.  For $w\in C_+$,
put
\[
  \zeta(w)=\sup\{t>0:w(t)>0\}.
\]
We use the excursion space
\[
  \mathcal W=
  \left\{w\in C_+:
  \begin{array}{l}
    w(0)=0,\quad 0<\zeta(w)<\infty,\\
    w(t)>0\ \text{for }0<t<\zeta(w),\\
    w(t)=0\ \text{for }t\geq\zeta(w)
  \end{array}
  \right\},
\]
endowed with the Borel $\sigma$-field inherited from $C_+$.  Write
$Y_t(w)=w(t)$ for the coordinate map.  Let $\mathbb N$ be the
canonical excursion measure of the critical Feller branching
diffusion, normalized by
\[
  \mathbb N(1-e^{-\lambda Y_t})=u_t(\lambda),
  \qquad t>0,\ \lambda>0.
\]
Since $\{\zeta>t\}=\{Y_t>0\}$ on $\mathcal W$, monotone
convergence gives
\begin{equation}\label{e3.16}
  \mathbb N(\zeta>t)
  =\lim_{\lambda\to\infty}
    \mathbb N(1-e^{-\lambda Y_t})
  =\lim_{\lambda\to\infty}u_t(\lambda)
  =\frac2t.
\end{equation}
If
\[
  \Pi=\sum_i\delta_{(s_i,Y^i)}
\]
is a Poisson random measure on $[0,1]\times\mathcal W$ with
intensity $\dd a(s)\mathbb N(\dd Y)$, then
\begin{equation}\label{e3.17}
  Z_t=\sum_{s_i\leq t}Y^i_{t-s_i}
\end{equation}
 has a continuous version with the same law on $C([0,1])$
as the solution of \eqref{e3.1}. Indeed, the Poisson exponential formula and the branching
property give the same transition Laplace transforms as those
of \eqref{e3.1}.
We use continuous versions of all Poisson sums below.
For a general construction of inhomogeneous immigration processes
by Poisson sums, see Li~\cite[Section~4]{Li02}.

 Theorem~\ref{t1.4} follows immediately from the following result,
in which the threshold depends on $J$ only through its length.

%has the law of \eqref{e3.1}.
%\revision{For general inhomogeneous immigration with diffuse
%immigration-time measures and its Poisson--Kuznetsov construction,
%see Li~\cite[Example~2.3, Theorem~3.1 and Lemma~4.1]{Li02};
%see also Li~\cite[Chapter~9]{Li11} and Li and Wang~\cite{LW99}
%for related immigration constructions.
%All Poisson sums below are understood in their continuous versions,
%constructed measurably from the respective restrictions of $\Pi$;
%independence is thereby preserved. Equality at rational times then
%extends to the entire interval by continuity.}

\begin{theorem}\label{t3.6}
Fix $0<\theta<1$, $0<\eta<1$, and an open interval
$J\subset(0,1)$ of length $L>0$.  There is
$\kappa=\kappa(\theta,\eta,L)>0$ such that, for every continuous
increasing $a$ with $a(0)=0$ and $a(1)\leq\kappa$,
\begin{equation}\label{e3.18}
  \P\bigl(\dim(\cZ\cap J)\geq\theta\bigr)\geq1-\eta.
\end{equation}
\end{theorem}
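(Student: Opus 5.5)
The plan is to boost the fixed-probability estimate of Proposition~\ref{p3.5} to probability $1-\eta$. We localize the Poisson excursion representation \eqref{e3.17} to many disjoint subintervals of $J$, which makes the corresponding events independent.

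\textbf{Choice of parameters.} First fix $\delta\in(0,1/4)$ with $\theta<1-2\delta$. Choose an integer $N$, to be taken large depending on $\eta$. Inside $J$, place $N$ disjoint open intervals $I_1,\dots,I_N$, each of length $\ell=L/(4N)$, and separate consecutive ones by gaps of length at least $\ell$. Around each $I_k$ take a neighborhood $U_k\supset I_k$ that extends a distance $\ell/2$ on each side. The $U_k$ are then pairwise disjoint, with $\mathrm{dist}(I_k,U_k^c)\ge\ell/2$.

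\textbf{Restricted Poisson sums.} Write $\Pi_k$ for the restriction of $\Pi$ to $U_k\times\mathcal W$. Let $Z^{(k)}_t=\sum_{s_i\in U_k,\,s_i\le t}Y^i_{t-s_i}$. Since the $U_k$ are disjoint, the $\Pi_k$ are independent, and hence so are the processes $Z^{(k)}$. Each $Z^{(k)}$ solves \eqref{e3.1} with the immigration $a_k(t)=\int_0^t\1_{U_k}\,\dd a$, which is again continuous, increasing, and satisfies $a_k(1)\le a(1)\le\kappa$.

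\textbf{Per-interval success probability.} Apply Lemma~\ref{l3.4} to $a_k$, or directly to $a$ since $M_{a_k}\le M_a$. This gives $|I_k\setminus G_\delta(a_k)|\le\kappa/\delta$. If $\kappa\le\delta\ell/2$, then $g_{I_k}\ge\ell/2$. Proposition~\ref{p3.5} then yields
\[
  \P\bigl(\dim(\Zeros(Z^{(k)})\cap\overline{I_k})\ge\theta\bigr)\ge c_{\delta,\theta}\,\ell^{2\delta}/4=:p,
\]
and $p>0$ depends only on $\delta,\theta,\ell$. By independence, the probability that none of the $N$ events occurs is at most $(1-p)^N$. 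This is where the main obstacle lies: $p$ decays like $N^{-2\delta}$ as $N$ grows, so we need $N(1-p)\cdots$ to be controlled. Since $N p\asymp N^{1-2\delta}\to\infty$ because $2\delta<1$, we get $(1-p)^N\le e^{-pN}\to0$. Choose $N$ with $e^{-pN}\le\eta/2$. The order of choices is $\delta$ first, then $N$, then $\kappa$, and it is consistent.

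\textbf{Transfer to $Z$.} We must pass from the $Z^{(k)}$ back to $Z$. On $\overline{I_k}$ we have $Z=Z^{(k)}+\widetilde Z^{(k)}$, where $\widetilde Z^{(k)}$ collects the excursions born in $[0,1]\setminus U_k$. Any such excursion born before $I_k$ that is alive on $\overline{I_k}$ must have lifetime at least $\ell/2$. Excursions born after $\sup I_k$ do not contribute. So $\widetilde Z^{(k)}\equiv0$ on $\overline{I_k}$ unless $\Pi$ has an atom with $\zeta\ge\ell/2$. By \eqref{e3.16}, the number of such atoms over all of $[0,1]$ is Poisson with mean $a(1)\cdot 4/\ell\le4\kappa/\ell$. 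Hence this bad event has probability at most $4\kappa/\ell$, and we impose $\kappa\le\eta\ell/8$. Off the bad event, $Z=Z^{(k)}$ on every $\overline{I_k}$ simultaneously. Thus $\dim(\cZ\cap J)\ge\theta$ except on a set of probability at most $\eta/2+\eta/2$. Since $\dim(\overline{I_k})$ versus $I_k$ differs only by endpoints, which are inside $J$ anyway, this is harmless.

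Finally, set $\kappa=\min(\delta\ell/2,\eta\ell/8)$. It depends only on $\theta,\eta,L$, as required.
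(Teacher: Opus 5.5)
Your proof is correct and follows the paper's route. You localize the Poisson excursion representation \eqref{e3.17} to disjoint $\ell/2$-neighborhoods of separated subintervals of $J$, apply Proposition~\ref{p3.5} to each of the independent restricted processes, and discard the event that some excursion lives longer than order $\ell$. The one difference is that you impose $\kappa\le\delta\ell/2$ instead of the paper's $\kappa\le\delta L/2$, so every $I_k$ individually has good set of measure at least $\ell/2$. This makes the paper's residue-class selection and Cauchy--Schwarz step unnecessary, and it costs nothing, because the localization step already forces $\kappa$ to be of order $\eta\ell$.
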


\begin{proof}
Choose $\delta\in(0,1/4)$ so that $\theta<1-2\delta$, and assume
\begin{equation}\label{e3.19}
  \kappa\leq\frac{\delta L}{2}.
\end{equation}
By Lemma \ref{l3.4}, we have 
\begin{equation*} 
  |(0,1]\setminus G_\delta(a)|
  \leq\frac{a(1)}{\delta} \leq \frac{\kappa}{\delta} \leq \frac{  L}{2}.
\end{equation*}
Hence it is immediate that (recall $|J|=L$)
\[
  |J\cap G_\delta(a)|\geq L/2.
\]

Partition $J$ into $3n$ consecutive intervals of length
$\ell=L/(3n)$.  One residue class modulo $3$ contains intervals
$(I_k)_{k\in\mathcal I}$ satisfying
\begin{equation}\label{e3.20}
  \sum_{k\in\mathcal I}g_k\geq L/6,
  \qquad
  g_k:=|I_k\cap G_\delta(a)|>0,
\end{equation}
and distinct $I_k$'s are separated by distance at least $2\ell$.
Let
\[
  E_k=\{s\in[0,1]:\operatorname{dist}(s,I_k)<\ell/2\}.
\]
Then the $E_k$'s are pairwise disjoint.

Let $Z^{(k)}$ be obtained from \eqref{e3.17} by retaining only
Poisson points with $s_i\in E_k$, and define
\[
  a_k(t)=\int_{[0,t]}\1_{E_k}(s)\,\dd a(s).
\]

\revision{Since $a$ is continuous, so is $a_k$.}  Moreover, for
$t\in I_k$ and $0<h\leq t$,
\[
  a_k(t)-a_k(t-h)
  =\int_{(t-h,t]}\1_{E_k}(s)\,\dd a(s)
  \leq a(t)-a(t-h).
\]
Consequently,
\[
  M_{a_k}(t)\leq M_a(t),\qquad t\in I_k,
\]
and therefore
\[
  I_k\cap G_\delta(a)
  \subseteq I_k\cap G_\delta(a_k).
\]
By letting
\[
  g_k'=|I_k\cap G_\delta(a_k)|,
\]
we have $g_k'\geq g_k$.  By the restriction property of Poisson random
measures, $Z^{(k)}$ has the representation \eqref{e3.17} with
$\dd a$ replaced by $\dd a_k=\1_{E_k}\dd a$; hence it has the
law of \eqref{e3.1} with immigration path $a_k$.  Proposition
\ref{p3.5} is therefore applicable to $Z^{(k)}$ on $I_k$.  With
\[
  S_k=
  \{\dim(\Zeros(Z^{(k)})\cap I_k)\geq\theta\},
\]
and using $g_k'\geq g_k$, we obtain
\begin{equation}\label{e3.21}
  \P(S_k)
  \geq c_{\delta,\theta}
  \frac{(g_k')^2}{\ell^{2-2\delta}}
  \geq c_{\delta,\theta}
  \frac{g_k^2}{\ell^{2-2\delta}}.
\end{equation}
Here Proposition \ref{p3.5} is stated with $\overline I_k$;
since $\theta>0$, adding or removing the two endpoints does not
change the Hausdorff-dimension event.
The events $(S_k)_{k\in\mathcal I}$ are independent.  Since
$|\mathcal I|\leq n$, Cauchy--Schwarz and \eqref{e3.20} give
\[
 \frac{L^2}{36} \leq \Big(\sum_{k\in\mathcal I}g_k\Big)^2
  \leq n\sum_{k\in\mathcal I}g_k^2.
\]
Consequently,
\begin{equation}\label{e3.22}
  \sum_{k\in\mathcal I}\P(S_k)
  \geq  \frac{c_{\delta,\theta}}{\ell^{2-2\delta}} \sum_{k\in\mathcal I}   g_k^2
  \geq  \frac{c_{\delta,\theta}}{\ell^{2-2\delta}}  \frac{L^2}{36n} =c_{\delta,\theta,L}\ell^{2\delta-1},
\end{equation}
where in the last equality we use $L/(3n)=\ell$.
Choose $n$  large (i.e. $\ell$ small) so that
\begin{equation}\label{e3.23}
  \P\left(\bigcap_{k\in\mathcal I}S_k^c\right)=\prod _{k\in\mathcal I} (1-\P(S_k))
  \leq
  \exp\left\{-\sum_{k\in\mathcal I}\P(S_k)\right\}
  \leq\eta/2.
\end{equation}

Set
\[
  N_\ell
  =\#\{i:s_i\in[0,1],\ \zeta(Y^i)>\ell/4\}.
\]
By \eqref{e3.16}, $N_\ell$ is Poisson with mean
\[
  \frac{8a(1)}{\ell}.
\]
\revision{After fixing $n$, choose
\[
  \kappa\leq\min\{\delta L/2,\eta\ell/16\}.
\]
Then}
\begin{equation}\label{e3.24}
  \P(N_\ell\geq1) \leq \E[N_\ell]=\frac{8a(1)}{\ell}
  \leq\frac{8\kappa}{\ell}
  \leq\eta/2.
\end{equation}
If $N_\ell=0$, then for $t\in I_k$ every excursion born at
$s\notin E_k$ and $s<t$ is zero at age $t-s$ since
$t-s\geq\ell/2$.  Hence
\[
  Z_t=Z_t^{(k)},\qquad t\in I_k.
\]
Thus, on $\{N_\ell=0\}\cap\bigcup_{k\in\mathcal I}S_k$,
$\dim(\cZ\cap J)\geq\theta$.  Equations \eqref{e3.23} and
\eqref{e3.24} prove \eqref{e3.18}.
\end{proof}

The following conditional form will be used below.

\begin{corollary}\label{c3.7}
Let $A$ be a random continuous increasing function on $[0,1]$
with $A_0=0$.
\revision{Let $\cG$ be a sub-$\sigma$-field such that $A$ is
$\cG$-measurable as a $C([0,1])$-valued random variable.
Suppose that, for almost every $\omega$, the regular conditional
law of $Z$ given $\cG$ is the law of the solution of
\eqref{e3.1} with deterministic immigration $a=A(\omega)$.}
For the parameters in Theorem \ref{t3.6},
\begin{equation}\label{e3.25}
  \P\bigl(\dim(\Zeros(Z)\cap J)\geq\theta\bigr)
  \geq
  1-\eta-\P(A_1>\kappa).
\end{equation}
\end{corollary}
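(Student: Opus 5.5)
The plan is to condition on $\cG$ and apply Theorem~\ref{t3.6} pathwise. Let $Q_\omega$ denote the regular conditional law of $Z$ given $\cG$. By hypothesis, for almost every $\omega$, $Q_\omega$ is the law on $C([0,1])$ of the solution of \eqref{e3.1} with deterministic immigration $a=A(\omega)$.

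The first step is measurability. The set
\[
  \Gamma=\{w\in C([0,1]):\dim(\Zeros(w)\cap J)\geq\theta\}
\]
must be a universally measurable subset of $C([0,1])$, so that $\P(Z\in\Gamma\mid\cG)(\omega)=Q_\omega(\Gamma)$. I would argue as follows. The map $w\mapsto\Zeros(w)\cap\overline{J'}$, for closed subintervals $J'\subset J$, is upper semicontinuous into the space of compact sets with the Hausdorff metric, hence Borel. Hausdorff dimension of compact sets is Borel measurable in the Hausdorff metric, because $\mathcal H^s_\infty$ is upper semicontinuous. Writing $J$ as an increasing union of closed intervals $\overline{J'_m}$ then gives
\[
  \dim(\Zeros(w)\cap J)=\sup_m\dim(\Zeros(w)\cap\overline{J'_m}),
\]
which is Borel in $w$. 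Alternatively, universal measurability from analytic-set arguments would suffice. This is the only technical obstacle, and it is routine.

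The second step applies the bound. On the $\cG$-measurable event $\{A_1\leq\kappa\}$, with $\kappa=\kappa(\theta,\eta,L)$ from Theorem~\ref{t3.6}, the path $A(\omega)$ is continuous and increasing with $A_0=0$ and $A_1(\omega)\leq\kappa$. Theorem~\ref{t3.6} therefore gives $Q_\omega(\Gamma)\geq1-\eta$ for almost every such $\omega$. Because $\kappa$ is uniform over all admissible $a$, no measurable dependence of $\kappa$ on $\omega$ arises.

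The final step integrates over $\omega$:
\[
  \P(Z\in\Gamma)=\E[Q_\cdot(\Gamma)]\geq\E\bigl[Q_\cdot(\Gamma)\1_{\{A_1\leq\kappa\}}\bigr]\geq(1-\eta)\P(A_1\leq\kappa)\geq1-\eta-\P(A_1>\kappa),
\]
which is \eqref{e3.25}.
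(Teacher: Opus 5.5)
Your proposal is correct and follows the paper's own argument: condition on $\cG$, use the uniformity of $\kappa$ in Theorem~\ref{t3.6} to get conditional probability at least $1-\eta$ on $\{A_1\leq\kappa\}$, and integrate. Your Borel-measurability check for $\{w:\dim(\Zeros(w)\cap J)\geq\theta\}$ (upper semicontinuity of the zero-set map and of Hausdorff content, plus countable stability) is sound and fills in a detail the paper leaves implicit.
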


\begin{proof}
\revision{Condition on $\cG$} and apply Theorem \ref{t3.6} on
$\{A_1\leq\kappa\}$.
\end{proof}

\begin{lemma}\label{l3.8}
\revision{Let $T>0$ and suppose that $Z$ satisfies}
\[
  Z_t=\int_0^t\sqrt{Z_s}\,\dd B_s+a(t),\qquad Z_0=0,
\]
\revision{for $0\leq t\leq T$.} Then, for every $m>0$,
\[
  \widetilde Z_u=\frac{Z_{mu}}m,\qquad
  \widetilde a(u)=\frac{a(mu)}m,
  \revision{\qquad 0\leq u\leq T/m,}
\]
satisfy
\[
  \widetilde Z_u
  =
  \int_0^u\sqrt{\widetilde Z_v}\,\dd\widetilde B_v
  +\widetilde a(u),
\]
where $\widetilde B_u=m^{-1/2}B_{mu}$ is a standard Brownian
motion \revision{on $[0,T/m]$}.
\end{lemma}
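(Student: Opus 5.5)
The plan is a direct Itô-calculus verification using the time-change rules for Brownian motion and for deterministic Stieltjes integrals. First I would check that $\widetilde B_u=m^{-1/2}B_{mu}$, $0\leq u\leq T/m$, is a standard Brownian motion with respect to the time-changed filtration $\widetilde\cF_u=\cF_{mu}$, which still satisfies the usual conditions. Continuity, $\widetilde B_0=0$, and independent Gaussian increments with $\operatorname{Var}(\widetilde B_u-\widetilde B_v)=m^{-1}\cdot m(u-v)=u-v$ are immediate. Adaptedness to $(\widetilde\cF_u)$, and independence of $\widetilde B_u-\widetilde B_v$ from $\widetilde\cF_v$, follow from the corresponding properties of $B$ relative to $(\cF_t)$. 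Alternatively, Lévy's characterization applies, since $\langle\widetilde B\rangle_u=m^{-1}\langle B\rangle_{mu}=u$.

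Second, I would identify the stochastic integral after the time change. Evaluating the equation for $Z$ at $t=mu$ gives
\[
  \frac{Z_{mu}}m=\frac1m\int_0^{mu}\sqrt{Z_s}\,\dd B_s+\frac{a(mu)}m .
\]
The key identity to establish is
\[
  \int_0^{mu}\sqrt{Z_s}\,\dd B_s=\int_0^u\sqrt{Z_{mv}}\,\dd B_{mv}
  =\sqrt m\int_0^u\sqrt{Z_{mv}}\,\dd\widetilde B_v .
\]
I would prove it first for simple predictable integrands, where it reduces to rewriting Riemann--Itô sums over the partition points $mv_j$. I would then pass to the limit in $L^2$, or by localization for general progressive integrands with $\int_0^{T}Z_s\,\dd s<\infty$. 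The isometry is preserved because $\int_0^{mu}Z_s\,\dd s=m\int_0^uZ_{mv}\,\dd v$. Equivalently, one can cite the standard time-change theorem for stochastic integrals under the deterministic linear clock $v\mapsto mv$.

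Third, I would combine the pieces. Since $\sqrt{Z_{mv}}=\sqrt m\sqrt{\widetilde Z_v}$, the martingale term becomes
\[
  \frac1m\cdot\sqrt m\cdot\sqrt m\int_0^u\sqrt{\widetilde Z_v}\,\dd\widetilde B_v
  =\int_0^u\sqrt{\widetilde Z_v}\,\dd\widetilde B_v .
\]
Together with $\widetilde a(u)=a(mu)/m$, this yields the claimed equation on $[0,T/m]$. I would also note that $\widetilde a$ is continuous, increasing, and satisfies $\widetilde a(0)=0$, and that $\widetilde Z$ is nonnegative, continuous and $(\widetilde\cF_u)$-adapted. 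Hence $\widetilde Z$ is the unique nonnegative solution, and the law statements used earlier in the paper transfer.

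The only step needing care is the second one, namely justifying the substitution inside the Itô integral. It is routine: the time change is deterministic and linear, so the approximation by simple integrands is exact, and no genuine obstacle is expected.
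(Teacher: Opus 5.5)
Your proposal is correct and follows the same route as the paper: evaluate the equation at $t=mu$, divide by $m$, and rewrite the stochastic integral under the linear clock $v\mapsto mv$ using $B_{mv}=\sqrt m\,\widetilde B_v$. You add details that the paper's two-line proof leaves implicit: the time-changed filtration $\cF_{mu}$, the check that $\widetilde B$ is a Brownian motion, and the justification of the time change through simple integrands.
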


\begin{proof}
Since $B_{mv}=\sqrt m\,\widetilde B_v$,
\[
  \frac1m\int_0^{mu}\sqrt{Z_s}\,\dd B_s
  =\int_0^u\sqrt{\frac{Z_{mv}}m}\,\dd\widetilde B_v.
\]
Dividing the equation for $Z_{mu}$ by $m$ gives the result.
\end{proof}

\section{Local support estimate}

\begin{proposition}\label{p4.1}
Fix $d\geq1$, $\alpha\in(0,2)$,
$0<\theta<1$, and $0<\eta<1$.  There exists
$\rho_0\in(0,1/4)$ such that, for every
$0<\rho\leq\rho_0$, there is
$\gamma=\gamma(d,\alpha,\theta,\eta,\rho)>0$ with the following
property.

Let $0\neq\mu\in M_F(\R^d)$, put $m=\mu(1)$, and suppose that
for some $y\in\R^d$ and $R>0$,
\begin{equation}\label{e4.1}
  \mu(D(y,R/2)^c)\leq\rho^3m,
  \qquad
  mR^{-\alpha}\leq\gamma.
\end{equation}
Then
\begin{equation}\label{e4.2}
  \P_\mu\left(
    \dim\bigl(H_y^R\cap(\rho^2m,\rho m)\bigr)\geq\theta
  \right)
  \geq1-\eta.
\end{equation}
\end{proposition}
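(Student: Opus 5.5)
Translate so that $y=0$. The plan is to split $\mu$ into the part inside $D_{R/2}$ and the part outside, and to treat the outside part as a small perturbation that dies early.

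\emph{Step 1: splitting the initial measure.} Write $\mu=\mu_1+\mu_2$ with $\mu_1=\mu|_{\overline{D_{R/2}}}$ and $\mu_2(1)\leq\rho^3m$. By the branching property, $X=X^1+X^2$ with $X^1,X^2$ independent superprocesses started from $\mu_1,\mu_2$. By \eqref{e1.3},
\[
\P(X^2_t\neq0\text{ for some }t\geq\rho^2m)=1-e^{-2\rho^3m/(\rho^2m)}\leq2\rho .
\]
Choose $\rho_0$ with $2\rho_0\leq\eta/3$. On the complementary event, $X_t=X^1_t$ for all $t\geq\rho^2m$, so it suffices to treat $X^1$, which satisfies \eqref{e2.6}.

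\emph{Step 2: decomposition and reduction to the zero set.} Apply Proposition~\ref{p2.2} to $X^1$, giving $X^1=V^R+W^R$ with $Z=W^R(1)$ solving \eqref{e2.12} given $V^R$. At any time $t$ with $Z_t=0$ and $V^R_t\neq0$ we have $S(X^1_t)\subseteq\overline{D_R}$ and $X^1_t\neq0$, so $t\in H_0^R$.

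\emph{Step 3: rescaling and the zero-set estimate.} Rescale time by $m$ using Lemma~\ref{l3.8}: set $\widetilde Z_u=Z_{mu}/m$ and $\widetilde A_u=A^R_{mu}/m$ for $u\in[0,1]$. Take $J=(\rho^2,\rho)$, whose length depends only on $\rho$. Let $\kappa=\kappa(\theta,\eta/3,J)$ be given by Theorem~\ref{t1.4}. Corollary~\ref{c3.7}, applied with $\cG=\sigma(V^R)$ (conditionally on $V^R$, $\widetilde A$ is continuous and increasing by Proposition~\ref{p2.1}), gives
\[
\P\bigl(\dim(\Zeros(\widetilde Z)\cap J)\geq\theta\bigr)\geq1-\eta/3-\P(\widetilde A_1>\kappa).
\]
By Markov's inequality and \eqref{e2.10},
\[
\P(\widetilde A_1>\kappa)\leq \frac{\E[A^R_m]}{m\kappa}\leq \frac{C\,m\cdot mR^{-\alpha}}{m\kappa}\leq\frac{C\gamma}{\kappa}.
\]
This is at most $\eta/6$ once $\gamma$ is small in terms of $\rho,\theta,\eta$. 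Time-scaling preserves dimension, so $\dim(\Zeros(Z)\cap(\rho^2m,\rho m))\geq\theta$ with probability at least $1-\eta/3-\eta/6$.

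\emph{Step 4: survival of $V^R$, the main obstacle.} It remains to show that $V^R_t\neq0$ on all of $(\rho^2m,\rho m)$ with high probability; I expect this to be the delicate step. It suffices that $V^R_{\rho m}\neq0$, since $V^R(1)$ is absorbed at zero. Use the comparison $V^R(1)\leq X^1(1)$ in the other direction, namely that $V^R$ survives at least as long as a Feller diffusion whose loss to immigration is small. Concretely, $V^R(1)$ is a Feller diffusion with drift $-V^R_t(f_R)$, and the total drift $A^R_{\rho m}$ is at most $\kappa m$ with high probability. Alternatively, write $X^1(1)=V^R(1)+Z$; then $X^1$ survives past $\rho m$ with probability $1-e^{-2(1-\rho^3)/\rho}\geq1-e^{-1/\rho}$. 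On the event $\{V^R_{\rho m}=0\}$, $X^1$ then survives only through $W^R$, whose mass was seeded by total immigration at most $\kappa m$. Conditionally on $V^R$, the probability that $W^R$ survives to time $\rho m$ is at most $1-\exp\{-\int_0^{\rho m}u_{\rho m-s}(\infty)\,\dd A^R_s\}$. This is small only if the immigration is not concentrated near $\rho m$. To handle that, I would bound the event via the zero set already obtained: on the good event, $Z$ vanishes at some $t_0\in(\rho^2m,\rho m)$. If also $X^1_{t_0}\neq0$, then $V^R_{t_0}\neq0$, and restricting to $t\leq t_0$ still gives the required set in $(\rho^2m,t_0)$, of dimension at least $\theta$ provided it is taken near $\rho m$. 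Rather than this, the cleaner approach is to apply the zero-set estimate with $J=(\rho^2,\rho^{3/2})$ and require $V^R_{\rho^{3/2}m}\neq0$. By first moments, $\E[X^1_t(1)-V^R_t(1)]=\E A^R_t\leq C m\gamma$, while $\P(X^1_{\rho^{3/2}m}(1)<\text{small})$ is controlled by \eqref{e1.3}; combining these via Markov's inequality gives $\P(V^R_{\rho^{3/2}m}=0)\leq\eta/6$ for $\rho_0$ and $\gamma$ small. Summing the error terms gives \eqref{e4.2}.
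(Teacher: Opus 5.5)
Steps 1--3 are the paper's proof: the same split of $\mu$ into independent $X^1,X^2$, the bound $\P(\zeta_2>\rho^2m)\le2\rho$, the decomposition $X^1=V^R+W^R$, the time change by $m$ via Lemma~\ref{l3.8}, Corollary~\ref{c3.7} with $\cG=\sigma(V^R)$ and $J=(\rho^2,\rho)$, and Markov's inequality with Proposition~\ref{p2.1} giving $\P(a(1)>\kappa)\le C\gamma/\kappa$.

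You diverge at Step~4. You call it the main obstacle, but the paper settles it in one line. You already wrote the key observation yourself: if $Z_{t_0}=0$ and $X^1_{t_0}\neq0$, then $V^R_{t_0}=X^1_{t_0}\neq0$. The point is to apply it at every zero simultaneously, not at a single $t_0$. The paper adds $\P(\zeta_1\le\rho m)\le e^{-1/\rho}$ to the condition defining $\rho_0$. Then on $\{\zeta_2\le\rho^2m<\rho m<\zeta_1\}$, every $t\in(\rho^2m,\rho m)$ with $W^R_t(1)=0$ satisfies $X_t=X^1_t=V^R_t\neq0$, hence $t\in H_0^R$. So all of $\Zeros(W^R(1))\cap(\rho^2m,\rho m)$ lies in $H_0^R$. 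There is no need to restrict to $(\rho^2m,t_0)$, to shrink $J$ to $(\rho^2,\rho^{3/2})$, or to prove that $V^R$ survives the whole interval.

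Your final version of Step~4 combines a Markov bound on $W^R_t(1)$, using $\E[W^R_t(1)]=\E[A^R_t]$, with a lower-tail bound on $X^1_t(1)$. It can be completed, but not as written: \eqref{e1.3} gives only $\P(X^1_t(1)=0)$, not $\P(X^1_t(1)\le\epsilon m)$. To fill this you need three things:
\begin{itemize}
\item the Laplace transform $\E e^{-\lambda X^1_t(1)}=\exp\{-\mu_1(1)u_t(\lambda)\}$ with $\lambda=1/(\epsilon m)$, which gives $\P(X^1_t(1)\le\epsilon m)\le e\exp\{-2\mu_1(1)/(2\epsilon m+t)\}$;
\item an auxiliary threshold $\epsilon=\epsilon(\rho,\eta)$, e.g.\ $\epsilon\le\rho^{3/2}/2$ when $t=\rho^{3/2}m$;
\item $\gamma$ small compared with $\epsilon\eta$, so that $\P(W^R_t(1)>\epsilon m)\le C\rho^{3/2}\gamma/\epsilon$ is small.
\end{itemize}
That route proves the stronger but unneeded fact that $V^R$ stays alive throughout the interval, at the cost of an extra parameter and an extra estimate. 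The paper's observation makes it unnecessary.
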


\begin{proof}
By translation invariance, take $y=0$.  Split
\[
  \mu^1=\mu|_{D_{R/2}},
  \qquad
  \mu^2=\mu|_{D_{R/2}^c},
\]
and realize $X=X^1+X^2$ as the sum of independent stable
superprocesses with initial states $\mu^1$ and $\mu^2$.
Let $\zeta_i$ be the extinction time of $X^i$.
Since $\mu^1(1)\geq(1-\rho^3)m$, formula
\eqref{e1.3} gives, for $\rho<1/4$,
\begin{align}
  \P(\zeta_1\leq\rho m)
  &=
  \exp\left\{-\frac{2\mu^1(1)}{\rho m}\right\}
  \leq e^{-1/\rho},
  \label{e4.3}\\
  \P(\zeta_2>\rho^2m)
  &\leq\frac{2\mu^2(1)}{\rho^2m}
  \leq2\rho.
  \label{e4.4}
\end{align}
Choose $\rho_0$ so small that
\begin{equation}\label{e4.5}
  e^{-1/\rho}+2\rho\leq\eta/3,
  \qquad 0<\rho\leq\rho_0.
\end{equation}

Apply Proposition \ref{p2.2} to $X^1$:
\[
  X_t^1=V_t^R+W_t^R.
\]

Define
\[
  \widetilde Z_u=\frac{W_{mu}^R(1)}m,
  \qquad
  a(u)=\frac{A_{mu}^R}{m},
  \qquad 0\leq u\leq1.
\]
By Proposition~\ref{p2.2} and Lemma~\ref{l3.8},
conditionally on $V^R$, $\widetilde Z$ is a Feller branching
diffusion with immigration path $a$.

Let $J=(\rho^2,\rho)$, and let $\kappa=\kappa(\theta,\eta/3,J)$ be the
constant in Theorem \ref{t3.6}.  Conditionally on $V^R$, the path $a$ is
deterministic.  Hence, on the event $\{a(1)\leq\kappa\}$, Theorem
\ref{t3.6} gives
\[
  \P\left(
    \left.
    \dim\bigl(\Zeros(\widetilde Z)\cap J\bigr)\geq\theta
    \ \right|\ V^R
  \right)
  \geq1-\frac{\eta}{3}.
\]
Equivalently, Corollary \ref{c3.7},
\revision{with $\cG=\sigma(V_s^R:s\geq0)$,} yields
\begin{equation}\label{e4.6a}
  \P\left(
    \dim\bigl(\Zeros(\widetilde Z)\cap J\bigr)\geq\theta
  \right)
  \geq1-\frac{\eta}{3}-\P(a(1)>\kappa).
\end{equation}

Proposition \ref{p2.1} gives
\begin{equation}\label{e4.6}
  \E[a(1)]
  =
  \frac{\E[A_m^R]}m
  \leq C\mu^1(1)R^{-\alpha}
  \leq CmR^{-\alpha}.
\end{equation}
Choose 
\begin{equation*} 
0<\gamma\leq\frac{\eta\kappa}{3C}
\end{equation*}
such that \eqref{e4.6} implies
\begin{equation*} 
 \E[a(1)] \leq CmR^{-\alpha} \leq \frac{\eta\kappa}{3}.
\end{equation*}
Then Markov's inequality implies
\[
  \P(a(1)>\kappa)
  \leq \frac{\E[a(1)]}{\kappa}
  \leq\frac{\eta}{3}.
\]
Substituting this into \eqref{e4.6a},
\[
  \P\left(
    \dim\bigl(
      \Zeros(\widetilde Z)\cap(\rho^2,\rho)
    \bigr)\geq\theta
  \right)
  \geq1-\frac{2\eta}{3}.
\]
For $t=mu$,
\[
  W_t^R(1)=0
  \quad\Longleftrightarrow\quad
  \widetilde Z_{t/m}=0.
\]
Thus, writing $mA=\{mu:u\in A\}$,
\[
  \Zeros(W^R(1))\cap(\rho^2m,\rho m)
  =m\bigl(\Zeros(\widetilde Z)\cap(\rho^2,\rho)\bigr).
\]
The map $u\mapsto mu$ is bi-Lipschitz, so Hausdorff dimension is
unchanged.  Hence
\begin{equation}\label{e4.8}
  \P\left(
    \dim\bigl(
      \Zeros(W^R(1))\cap(\rho^2m,\rho m)
    \bigr)\geq\theta
  \right)
  \geq1-\frac{2\eta}{3}.
\end{equation}

On the event
\[
  \{\zeta_2\leq\rho^2m<\rho m<\zeta_1\},
\]
the process $X^2$ has vanished throughout
$(\rho^2m,\rho m)$, while $X^1$ is nonzero there.  At every
zero of $W^R(1)$ in this interval,
\[
  X_t=X_t^1=V_t^R,
\]
and hence $X_t\neq0$ and
$S(X_t)\subseteq\overline{D_R}$.  Combining
\eqref{e4.5} and \eqref{e4.8} proves
\eqref{e4.2}.
\end{proof}

\subsection{Proofs of Theorems \ref{t1.1} and \ref{t1.2}}

The globalization arguments are the same as those in
\cite[Sections~2.2--2.3]{HM25}; we record only the points where
Proposition~\ref{p4.1} is used.

\begin{proof}[Proof of Theorem \ref{t1.1}]
The argument is the same as in \cite[Section~2.2]{HM25}, with
Proposition~\ref{p4.1} replacing the corresponding local support
estimate there.  We only indicate the reduction.  Fix $T>0$,
$0<\theta<1$, and $0<\eta<1$, and put $m=X_0(1)$.  Choose
$0<\rho\leq\rho_0$ so that $\rho m<T$, and then choose $R$ large
such that
\[
  X_0(D_{R/2}^c)\leq\rho^3m,
  \qquad
  mR^{-\alpha}\leq\gamma,
\]
where $\gamma$ is given by Proposition~\ref{p4.1}.  Hence
\[
  \P_{X_0}\bigl(\dim(H\cap(0,T))\geq\theta\bigr)\geq1-\eta.
\]
Letting $\eta\downarrow0$ and $\theta\uparrow1$ gives
$\dim(H\cap(0,T))=1$ almost surely.  Applying the same statement
after every rational time by the strong Markov property, exactly as
in \cite[Section~2.2]{HM25}, yields
\[
  \dim(H\cap(s,t))=1,
  \qquad 0\leq s<t\leq\zeta,
\]
almost surely.  The remaining assertions follow immediately.
\end{proof}

\begin{proof}[Proof of Theorem \ref{t1.2}]
We follow \cite[Section~2.3]{HM25},
\revision{using Proposition~\ref{p4.1} at stopping times determined
by the total mass.}
Fix $r>0$, choose
$\{x_k:k\geq1\}\subset\R^d$ with
\[
  \R^d=\bigcup_{k\geq1}D(x_k,r/4),
\]
and continuous $\phi_k^r:\R^d\to[0,1]$ such that
$\phi_k^r=0$ on $D(x_k,r/4)$ and $\phi_k^r=1$ on
$D(x_k,r/2)^c$.  Put
\[
  \cH^r=\bigcup_{k\geq1}H_{x_k}^r.
\]
Fix $0<\theta<1$ and $0<\eta<1$, choose
$0<\rho\leq\rho_0$, and let $\gamma$ be given by
Proposition~\ref{p4.1}.

% BEGIN BLUE REVISION: stopping at a prescribed total mass.
\begingroup
\color{\revisioncolor}Let $(\cF_t)_{t\geq0}$ be the completed,
right-continuous natural filtration of $X$.
For $0<\eps<X_0(1)$, define the stopping time
\[
  \tau_\eps=\inf\{t\geq0:X_t(1)\leq\eps\}.
\]
By continuity of the total mass,
\[
  0<\tau_\eps<\zeta,\qquad
  X_{\tau_\eps}(1)=\eps,\qquad
  \tau_\eps\uparrow\zeta
  \quad\text{as }\eps\downarrow0,
  \qquad \P_{X_0}\text{-a.s.}
\]
Let
\[
  B_\eps=
  \left\{
    X_{\tau_\eps}(\phi_k^r)\leq\rho^3\eps
    \text{ for some }k\geq1
  \right\}.
\]
Then $B_\eps\in\cF_{\tau_\eps}$.  Almost surely, there is an
index $k\geq1$ such that $F\in D(x_k,r/4)$.  For this index,
\eqref{e1.8} gives
\[
  \frac{X_{\tau_\eps}(\phi_k^r)}{\eps}
  =\frac{X_{\tau_\eps}(\phi_k^r)}{X_{\tau_\eps}(1)}
  \longrightarrow\phi_k^r(F)=0
  \qquad\text{as }\eps\downarrow0.
\]
Since $\rho$ is fixed, $B_\eps$ holds for all sufficiently small
$\eps$, almost surely.  Consequently,
\[
  \P_{X_0}(B_\eps)\longrightarrow1.
\]

On $B_\eps$, set
\[
  k_\eps=\min\left\{k\geq1:
    X_{\tau_\eps}(\phi_k^r)\leq\rho^3\eps
  \right\},
\]
and set $k_\eps=1$ on $B_\eps^c$.  This choice is
$\cF_{\tau_\eps}$-measurable, and on $B_\eps$ we have
\[
  X_{\tau_\eps}\bigl(D(x_{k_\eps},r/2)^c\bigr)
  \leq X_{\tau_\eps}(\phi_{k_\eps}^r)
  \leq\rho^3\eps.
\]
Write
\[
  G_\eps=
  \left\{\dim\bigl(\cH^r\cap(\tau_\eps,\infty)\bigr)
         \geq\theta\right\}.
\]
For $\eps$ small enough that $\eps r^{-\alpha}\leq\gamma$,
the strong Markov property and Proposition~\ref{p4.1}, applied
with initial measure $X_{\tau_\eps}$, center $x_{k_\eps}$, and
radius $r$, give
\[
  \P_{X_0}\!\left(G_\eps\,\middle|\,\cF_{\tau_\eps}\right)
  \geq1-\eta
  \qquad\text{on }B_\eps.
\]
Indeed, the proposition gives the dimension bound for
$H_{x_{k_\eps}}^r\subseteq\cH^r$ in the interval
$(\tau_\eps+\rho^2\eps,\tau_\eps+\rho\eps)$.
It follows that
\[
  \P_{X_0}(B_\eps\cap G_\eps)
  \geq(1-\eta)\P_{X_0}(B_\eps).
\]

For every $N\geq1$, since $\cH^r\subset(0,\zeta)$,
\begin{align*}
  &\P_{X_0}\!\left(
    \dim\bigl(\cH^r\cap(\zeta-N^{-1},\zeta)\bigr)\geq\theta
  \right)\\
  &\qquad\geq
  \P_{X_0}(B_\eps\cap G_\eps)
  -\P_{X_0}(\tau_\eps\leq\zeta-N^{-1})\\
  &\qquad\geq
  (1-\eta)\P_{X_0}(B_\eps)
  -\P_{X_0}(\tau_\eps\leq\zeta-N^{-1}).
\end{align*}
Letting $\eps\downarrow0$ gives a lower bound of $1-\eta$.
The event on the left does not depend on $\eta$.  Letting
$\eta\downarrow0$, then taking a countable sequence
$\theta\uparrow1$ and intersecting over $N\geq1$, gives
\[
  \dim\bigl(\cH^r\cap(\zeta-N^{-1},\zeta)\bigr)=1
  \quad\text{for every }N\geq1,\qquad \P_{X_0}\text{-a.s.}
\]
\par
\endgroup
% END BLUE REVISION.

Finally, choose $k_0$ with $F\in D(x_{k_0},r/4)$.  By
\eqref{e1.8}, $X_t(D(x_{k_0},r/2))>0$ for all $t$ sufficiently
close to $\zeta$.  If such a $t$ lies in $\cH^r$, then
$S(X_t)\subseteq\overline{D(x_k,r)}$ for some $k$, so
$\overline{D(x_k,r)}$ intersects $D(x_{k_0},r/2)$.  Consequently
$\|x_k-x_{k_0}\|_\infty<3r/2$, and hence
\[
  S(X_t)\subseteq\overline{D(F,3r)}.
\]
Thus $\cH^r\subseteq H_F^{3r}$ sufficiently close to extinction,
and the preceding full-dimensional statement yields
\[
  \dim\bigl(H_F^{3r}\cap(t,\zeta)\bigr)=1,
  \qquad t<\zeta.
\]
Replacing $r$ by $r/3$ proves the theorem.
\end{proof}

\appendix

\section{Auxiliary lemmas}\label{a1}

\subsection{Lemma \ref{l3.1}}

\begin{proof}[Proof of Lemma \ref{l3.1}]
Fix $t>0$ and $\lambda\geq0$. Recall that
\begin{equation} 
  u_t(\lambda)=\frac{2\lambda}{2+\lambda t},
\end{equation}
and $\partial_t u_t=-u_t^2/2$. Set
\[
  v(s)=u_{t-s}(\lambda),\qquad 0\leq s\leq t.
\]
Then $v'(s)=v(s)^2/2$.  Define
\begin{equation}\label{e6.1}
  N_s
  =
  \exp\left\{
    -v(s)Z_s+\int_0^s v(r)\,\dd a(r)
  \right\},\qquad 0\leq s\leq t.
\end{equation}
Set
\[
  H_s=-v(s)Z_s+\int_0^s v(r)\,\dd a(r),
  \qquad N_s=e^{H_s}.
\]
Since
\[
  \dd Z_s=\sqrt{Z_s}\,\dd B_s+\dd a(s),
\]
the semimartingale differential of $H$ is
\begin{align*}
  \dd H_s
  &=-v(s)\,\dd Z_s-v'(s)Z_s\,\dd s+v(s)\,\dd a(s)\\
  &=-v(s)\sqrt{Z_s}\,\dd B_s-v'(s)Z_s\,\dd s.
\end{align*}
The continuous local martingale part of $H$ is
\[
  M_s^H=-\int_0^s v(r)\sqrt{Z_r}\,\dd B_r,
\]
and hence
\[
  \dd\langle M^H\rangle_s=v(s)^2Z_s\,\dd s.
\]
It\^o's formula for $N_s=e^{H_s}$ therefore gives
\[
  \dd N_s
  =N_s\,\dd H_s+\frac12N_s\,\dd\langle M^H\rangle_s
  =-v(s)N_s\sqrt{Z_s}\,\dd B_s
   +N_sZ_s\left(-v'(s)+\frac12v(s)^2\right)\dd s.
\]
Since $v'(s)=v(s)^2/2$, the drift vanishes.  Moreover,
\[
  0\leq N_s\leq e^{\lambda a(t)},
\]
so $N$ is a true martingale.  Since $N_0=1$ and
\[
  N_t
  =
  \exp\left\{
    -\lambda Z_t+\int_0^tu_{t-s}(\lambda)\,\dd a(s)
  \right\},
\]
taking expectations proves \eqref{e3.3}.

For the two-time formula, condition on $\cF_t$.  By the Markov and
branching properties, the mass present at time $t'$ can be decomposed,
conditionally on $\cF_t$, into two independent parts: a Feller
branching diffusion started from $Z_t$ at time $t$, with no further
immigration, and the mass generated by the immigration on $(t,t']$,
started from zero.  The first part has conditional Laplace transform
\[
  \exp\{-u_{t'-t}(\gamma)Z_t\},
\]
while applying \eqref{e3.3} to the shifted immigration path
$r\mapsto a(t+r)-a(t)$ gives for the second part
\[
  \exp\left\{-\int_t^{t'}u_{t'-s}(\gamma)\,\dd a(s)\right\}.
\]
Consequently,
\begin{align*}
  \E[e^{-\gamma Z_{t'}}\mid\cF_t]
  &=
  \exp\left\{
    -u_{t'-t}(\gamma)Z_t
    -\int_t^{t'}u_{t'-s}(\gamma)\,\dd a(s)
  \right\}.
\end{align*}
Multiplying by $e^{-\lambda Z_t}$ and taking expectations gives
\begin{align*}
  \E[e^{-\lambda Z_t-\gamma Z_{t'}}]
  &=
  \exp\left\{-\int_t^{t'}u_{t'-s}(\gamma)\,\dd a(s)\right\}
  \E\left[
    e^{-(\lambda+u_{t'-t}(\gamma))Z_t}
  \right].
\end{align*}
Applying \eqref{e3.3} to the last expectation with
$\lambda+u_{t'-t}(\gamma)$ gives \eqref{e3.4}.

Finally, $q_\eps(t)>0$.  Otherwise $Z_t\geq\eps$ a.s., so by \eqref{e3.3},
\[
  \int_0^tu_{t-s}(\lambda)\,\dd a(s)
  =-\log\E[e^{-\lambda Z_t}]
  \geq\lambda\eps.
\]
Hence
\begin{equation}\label{e7.1}
  \frac1\lambda\int_0^tu_{t-s}(\lambda)\,\dd a(s)
  \geq\eps.
\end{equation}
On the other hand, since $a$ is continuous, the Stieltjes measure $\dd a$ has no
atom at $t$, and dominated convergence gives
\[
 \frac1\lambda\int_0^tu_{t-s}(\lambda)\,\dd a(s)
  =\int_0^t\frac{2}{2+\lambda(t-s)}\,\dd a(s)
  \longrightarrow 0,
\]
as $\lambda\to\infty$, contradicting \eqref{e7.1}.
\end{proof}

\subsection{Lemma \ref{l3.4}}

\begin{proof}[Proof of Lemma \ref{l3.4}]
We prove the statement with an arbitrary level $\lambda>0$.
Set
\[
  F(t)=a(t)-\lambda t,\qquad
  m(t)=\min_{0\leq s\leq t}F(s).
\]
For $t\in(0,1]$,
\begin{align*}
  M_a(t)>\lambda
  &\Longleftrightarrow
    \exists\,h\in(0,t]:
    a(t)-a(t-h)>\lambda h\\
  &\Longleftrightarrow
    \exists\,s\in[0,t):
    F(t)>F(s)\\
  &\Longleftrightarrow
    F(t)>\min_{0\leq s\leq t}F(s)=m(t).
\end{align*}
  For the last equivalence,
continuity of $F$ ensures that its minimum on $[0,t]$ is attained;
if $F(t)>m(t)$, a minimizer cannot be $t$ and therefore lies in
$[0,t)$. 

\revision{Set
\[
  E_\lambda=\{t\in(0,1]:M_a(t)>\lambda\}.
\]
Since $F$ and $m$ are continuous,
$E_\lambda=\{t\in[0,1]:F(t)>m(t)\}$ is relatively
open in $[0,1]$ and does not contain $0$.
Let $(I_j)$ be its connected components, which are
at most countably many, and set
\[
  \alpha_j=\inf I_j,\qquad \beta_j=\sup I_j.
\]
Since $\alpha_j\notin E_\lambda$,
$F(\alpha_j)=m(\alpha_j)$.

For $t\in I_j$, a minimizer of $F$ on $[0,t]$
cannot lie in $(\alpha_j,t]$: such a point $s$
would satisfy $F(s)=m(s)$, contrary to
$s\in E_\lambda$. Thus $m(t)=m(\alpha_j)$,
and continuity shows that $m$ is constant on
$[\alpha_j,\beta_j]$. Consequently,
\[
  F(\beta_j)\geq m(\beta_j)
  =m(\alpha_j)=F(\alpha_j),
\]
and hence
\[
  \lambda|I_j|
  \leq a(\beta_j)-a(\alpha_j).
\]
The intervals $(\alpha_j,\beta_j]$ are pairwise
disjoint. Therefore,
\[
  \lambda|E_\lambda|
  \leq
  \sum_j\bigl(a(\beta_j)-a(\alpha_j)\bigr)
  \leq a(1)-a(0)=a(1).
\]}

Taking $\lambda=\delta$ proves \eqref{e3.9}.
\end{proof}

\bigskip

\qquad

\end{document}